\documentclass[12pt, twoside]{article}
\usepackage{amsmath,amsthm,amssymb}
\usepackage{times}
\usepackage{enumerate}
\usepackage{extarrows}
\usepackage{color}
\usepackage{tabularx}
\usepackage{multirow}
\DeclareMathOperator*{\esssup}{ess\,sup}

\pagestyle{myheadings}
\def\titlerunning#1{\gdef\titrun{#1}}
\makeatletter
\def\author#1{\gdef\autrun{\def\and{\unskip, }#1}\gdef\@author{#1}}
\def\address#1{{\def\and{\\\hspace*{18pt}}\renewcommand{\thefootnote}{}%
\footnote {#1}}%
\markboth{\autrun}{\titrun}}
\makeatother
\def\email#1{e-mail: #1}
\def\subjclass#1{{\renewcommand{\thefootnote}{}%
\footnote{\emph{Mathematics Subject Classification (2010):} #1}}}
\def\keywords#1{\par\medskip
\noindent\textbf{Keywords.} #1}

\newtheorem{theorem}{Theorem}[section]
\newtheorem{lemma}[theorem]{Lemma}
\newtheorem{definition}[theorem]{Definition}
\newtheorem{proposition}[theorem]{Proposition}
\newtheorem{remark}[theorem]{Remark}
\newtheorem{corollary}[theorem]{Corollary}
\newtheorem{example}[theorem]{Example}

\newcounter{Mr}
\newtheorem{Result}[Mr]{\textbf{Main Result}}
\newcounter{Mp}
\newtheorem{Prop}[Mp]{\textbf{Main Proposition}}

\newcommand{\gm}{\gamma}
\newcommand{\R}{\mathbb{R}}
\newcommand{\eps}{\varepsilon}
\newcommand{\Proof}{\begin{proof}}
\newcommand{\End}{\end{proof}}








\numberwithin{equation}{section}

\frenchspacing

\textwidth=16cm
\textheight=23cm
\parindent=16pt
\oddsidemargin=-0.5cm
\evensidemargin=-0.5cm
\topmargin=-1cm



\setcounter{tocdepth}{2}

\newcommand{\PreserveBackslash}[1]{\let\temp=\\#1\let\\=\temp}
\newcolumntype{C}[1]{>{\PreserveBackslash\centering}p{#1}}
\newcolumntype{R}[1]{>{\PreserveBackslash\raggedleft}p{#1}}
\newcolumntype{L}[1]{>{\PreserveBackslash\raggedright}p{#1}}

\newcolumntype{I}{!{\vrule width 1pt}}
\newlength\savedwidth

\begin{document}


\baselineskip=15pt


\titlerunning{Weak KAM solutions of Hamilton-Jacobi equations}

\title{Weak KAM solutions of Hamilton-Jacobi equations  with  decreasing dependence on unknown functions}

\author{Kaizhi Wang \and Lin Wang \and Jun Yan}

\date{\today}

\maketitle

\address{Kaizhi Wang: School of Mathematical Sciences, Shanghai Jiao Tong University, Shanghai 200240, China; \email{kzwang@sjtu.edu.cn}
\and
Lin Wang: Yau Mathematical Sciences Center, Tsinghua University, Beijing 100084, China; \email{linwang@tsinghua.edu.cn}
\and Jun Yan: School of Mathematical Sciences, Fudan University, Shanghai 200433, China;
\email{yanjun@fudan.edu.cn}}
\subjclass{37J50; 35F21; 35D40}

\begin{abstract}
We consider the Hamilton-Jacobi equation
\[{H}(x,u,Du)=0,\quad x\in M,
\]
where $M$ is a connected, closed and smooth Riemannian manifold,  ${H}(x,u,p)$ satisfies Tonelli conditions with respect to  $p$ and certain decreasing condition with respect to $u$.
Based on a dynamical approach developed   in \cite{WWY,WWY1,WWY2}, we obtain a series of properties for weak KAM solutions (equivalently, viscosity solutions) of the stationary equation and the long time behavior of viscosity solutions of the  evolutionary equation on the Cauchy problem
\begin{equation*}
	\begin{cases}
w_t+{H}(x,w,w_x)=0,\quad (x,t)\in M\times (0,+\infty),\\
w(x,0)=\varphi(x), \quad x\in M.
\end{cases}
\end{equation*}

\keywords{Hamilton-Jacobi equations, contact Hamiltonian systems, viscosity solutions, Aubry-Mather theory, weak KAM theory}
\end{abstract}

%


\section{Introduction}
\setcounter{equation}{0}
\setcounter{footnote}{0}
\subsection{Main results}
In 1980s, Crandall and Lions introduced the notion of ``viscosity solutions" of scalar nonlinear first order Hamilton-Jacobi equations in \cite{CL}. The theory of viscosity solutions for Hamilton-Jacobi equations with Hamiltonians $H(x,u,p)$,  has been widely studied in the literature (see e.g., \cite{Bardi-Capuzzo-Dolcetta,Ba33,CIL,Ishii,Lions,LPV} and the references therein).  In this paper we aim to understand more about viscosity solutions of both stationary and evolutionary Hamilton-Jacobi equations with Hamiltonians $\bar{H}(x,u,p)$ which satisfy Tonelli conditions with respect to $p$ and certain decreasing condition with respect to $u$ from the weak KAM point of view.

Let $\bar{H}:T^*M\times\mathbb{R}\to \mathbb{R}$, $\bar{H}=\bar{H}(x,u,p)$, be a $C^3$ function satisfying
\begin{itemize}
	\item [\textbf{(H1)}] {\it Strict convexity}:  the Hessian $\frac{\partial^2 \bar{H}}{\partial p^2} (x,u,p)$ is positive definite for all $(x,u,p)\in T^*M\times\R$;
	\item [\textbf{(H2)}] {\it Superlinearity}: for every $(x,u)\in M\times\R$, $\bar{H}(x,u,p)$ is  superlinear in $p$;
	\item [\textbf{(H3)}] {\it Moderate decreasing}: there is a constant $\lambda>0$ such that for every $(x,u,p)\in T^{\ast}M\times\R$,
		\begin{equation*}
		-\lambda\leq\frac{\partial \bar{H}}{\partial u}(x,u,p)<0,
		\end{equation*}
\end{itemize}
where $M$ is a connected, closed  and smooth   Riemannian manifold, and $T^*M$ denotes the cotangent bundle of $M$.

Our first main result is devoted to the analysis of the structure of the set of all viscosity solutions of the  stationary Hamilton-Jacobi equation
\begin{align}\label{the00}\tag{HJ$_D$}
\bar{H}(x,u,Du)=0,\quad x\in M.
\end{align}
The letter $D$ represents that $\bar{H}$ is decreasing with respect to the argument $u$. Denote by $\mathcal{VS}(\bar{H})$ the set of all viscosity solutions of $\bar{H}(x,u,Du)=0$.

In order to state our results, let us first recall the notion of admissibility introduced in \cite{WWY2}. For any given $a\in\R$, $\bar{H}(x,a,p)$ is a classical Tonelli Hamiltonian. Ma\~n\'e  critical value  \cite{Mn3}  of $\bar{H}(x,a,p)$ is the unique value of $k$ for which $\bar{H}(x,a,Du)=k$ admits a global viscosity solution. Denote by $c(\bar{H}^a)$ the Ma\~n\'e critical value  of $\bar{H}(x,a,p)$. We say that $\bar{H}(x,u,p)$ is {\em admissible}, if there exists $a\in \R$ such that $c(\bar{H}^a)=0$.
Consider the admissibility assumption:
\begin{itemize}
\item [\textbf{(A)}] {\it Admissibility}:  $\bar{H}(x,u,p)$ is admissible.
\end{itemize}

\begin{Result}[\bf Structure of $\mathcal{VS}(\bar{H})$]\label{thmr1} Assume (H1), (H2) and (H3).
\begin{itemize}
\item [(1)] {\bf Existence}. $\mathcal{VS}(\bar{H})\neq\emptyset$ if and only if condition (A) holds true.

If, in addition, assume (A), then

\item [(2)] {\bf Compactness}. There is a constant $B>0$ such that $\|\bar{v}_-\|_{W^{1,\infty}(M)}\leq B$ for all $\bar{v}_-\in\mathcal{VS}(\bar{H})$.
\item [(3)] {\bf Minimal and maximal elements}. Let $\bar{u}_-:=\min_{\bar{v}_-\in\mathcal{VS}(\bar{H})}\bar{v}_-$. Then $\bar{u}_-\in \mathcal{VS}(\bar{H})$.
The set of all maximal viscosity solutions of equation \eqref{the00} is non-empty.
\item [(4)] {\bf Representation formula}.
Let $\bar{v}_-\in\mathcal{VS}$. Then
\[
\bar{v}_-(x)=\inf_{\xi\in \mathcal{I}_{\bar{v}_-}}\bar{m}(\xi,x),\quad x\in M,
\]
where
\[
\bar{m}(\xi,x):=\liminf_{M\ni y\rightarrow \xi}\inf_{\tau>0}\bar{h}_{y,\bar{v}_-(y)}(x,\tau),\quad x\in M,\quad  \xi\in \mathcal{I}_{\bar{v}_-},
\]
$\mathcal{I}_{\bar{v}_-}$ is a compact subset of $M$ depending only on $\bar{v}_-$, and the function $(x_0,u_0,x,t)\mapsto \bar{h}_{x_0,u_0}(x,t)$ is locally Lipschitz continuous on $M\times \R\times M\times (0,+\infty)$ and it depends only on $\bar{H}$.
\item [(5)] {\bf Regularity}. Let $\bar{v}_-\in \mathcal{VS}$. Then $\bar{v}_-$ is of class $C^{1,1}$ on $\mathcal{I}_{\bar{v}_-}$.
\item [(6)] {\bf Comparison}. Let $\bar{v}'_-$, $\bar{v}''_-\in \mathcal{VS}(\bar{H})$. Then
\begin{itemize}
\item [(i)] if $\bar{v}'_-\leq \bar{v}''_-$ everywhere, then $\emptyset\neq\mathcal{I}_{\bar{v}''_-}\subset \mathcal{I}_{\bar{v}'_-}\subset \mathcal{I}_{\bar{u}_-}$,
where $\bar{u}_-:=\min_{\bar{v}_-\in\mathcal{VS}(\bar{H})}\bar{v}_-$.
\item [(ii)] if there is a neighborhood of $\mathcal{I}_{\bar{v}'_-}$, denoted by $\mathcal{O}$, such that
\[
\bar{v}''_-|_{\mathcal{O}}\leq \bar{v}'_-|_{\mathcal{O}},
\]
then $\bar{v}''_-\leq \bar{v}'_-$ everywhere.
\item [(iii)] if
\[
\mathcal{I}_{\bar{v}''_-}=\mathcal{I}_{\bar{v}'_-},\quad \bar{v}''_-|_{\mathcal{O}}= \bar{v}'_-|_{\mathcal{O}},
\]
then $\bar{v}''_-= \bar{v}'_-$ everywhere.
\end{itemize}
\end{itemize}
\end{Result}

\begin{remark}
We make some explanations for  notions and notations in the above result.
\begin{itemize}
\item [(1)] See definitions of $\mathcal{I}_{\bar{v}_-}$ and  $\bar{h}_{\cdot,\cdot}(\cdot,\cdot)$ in Subsection 2.2 below.
\item [(2)] Note that $(C(M,\mathbb{R}),\geq)$ is a partially ordered set, where $v'$, $v''\in C(M,\mathbb{R})$, $v'\geq v''$ if and only if $v'(x)\geq v''(x)$ for all $x\in M$.
$\bar{v}_-^*\in \mathcal{VS}(\bar{H})$ is called a maximal element of $(\mathcal{VS}(\bar{H}),\geq)$ if for all $\bar{v}'_-\in\mathcal{VS}(\bar{H})$, $\bar{v}'_-\geq \bar{v}_-^*$ implies $\bar{v}'_-=\bar{v}_-^*$ everywhere.	
\end{itemize}
\end{remark}

The second aim of this paper to study the long time behavior of the viscosity solution of the Cauchy problem
\begin{equation}\label{evll}\tag{HJ$_C$}
\begin{cases}
w_t+\bar{H}(x,w,w_x)=0,\quad (x,t)\in M\times(0,\infty),\\
w(x,0)=\varphi(x), \quad x\in M,
\end{cases}
\end{equation}
where $\varphi\in C(M,\mathbb{R})$ is the initial data.

\begin{Result}[\bf Long time behavior of viscosity solutions of \eqref{evll}]\label{thmr2}
Assume (H1), (H2), (H3) and (A). Given $\varphi\in C(M,\R)$, let ${w}(x,t)$ denote the unique viscosity solution of \eqref{evll}. Denote by $\bar{u}_+$ the unique forward weak KAM solution of equation \eqref{the00}.
\begin{itemize}
\item [(1)] {\bf Uniform boundedness}.
    \begin{itemize}
    \item  [(1-1)]$w(x,t)$ is bounded on $M\times[0,+\infty)$ if and only if $\varphi$ satisfies:
    \begin{itemize}
    \item [(a)]  $\varphi\geq \bar{u}_+$ everywhere.
    \item [(b)] there exists $x_0\in M$ such that $\varphi(x_0)=\bar{u}_+(x_0)$.
    \end{itemize}
\item  [(1-2)]If one of conditions (a) and (b) is not satisfied, then we have two cases:
\begin{itemize}
\item [(i)] if there is $x_0\in M$ such that $\varphi(x_0)<\bar{u}_+(x_0)$, then $\lim_{t\to+\infty}w(x,t)=-\infty$ uniformly on $x\in M$.
\item [(i)] if $\varphi>\bar{u}_+$ everywhere, then
$\lim_{t\to+\infty}w(x,t)=+\infty$ uniformly on $x\in M$.
\end{itemize}
\item [(1-3)] There is $K>0$ such that for each $\varphi$ satisfying (a) and (b), there is $T_{\varphi}>0$ such that
    \[
    |w(x,t)|\leq K\quad  (x,t)\in M\times(T_{\varphi},+\infty).
    \]
\end{itemize}


\item [(2)] {\bf Sufficient conditions for convergence}.
\begin{itemize}
 \item [(2-1)] For any $\varphi\in C(M,\R)$, if $\bar{u}_+\leq \varphi\leq \bar{u}_-$, then $\lim_{t\to +\infty}\bar{T}^-_t\varphi=\bar{u}_-$ uniformly on $x\in M$.
 \item [(2-2)] If $\mathcal{VS}(\bar{H})=\{\varphi_\infty\}$ is a singleton, then $\varphi_\infty=\lim_{t\rightarrow+\infty}\bar{T}_t^-\varphi$ for all $\varphi$ satisfying (a) and (b).
\item [(2-3)]
Let $\bar{v}^*_-$ be a maximal element of $\mathcal{VS}(\bar{H})$. For  $\varphi$ satisfying (a), (b) and
 $\varphi\geq \bar{v}^*_-$, $w(x,t)$ converges uniformly to $\bar{v}^*_-$ as $t$ tends to infinity.
  \end{itemize}
\end{itemize}
\end{Result}


\begin{corollary}\label{ad-c}
Let $\bar{H}=\bar{H}(x,u,p):T^*\mathbb{S}\times\R\to\mathbb{R}$ be a $C^3$ Hamiltonian satisfying (H1), (H2) and (H3) and (A), where $\mathbb{S}$ is the unit circle. Given $\varphi\in C(M,\R)$, let ${w}(x,t)$ denote the unique viscosity solution of \eqref{evll}.
   If the contact vector field generated by $\bar{H}(x,u,p)$ has no singular points, then $\mathcal{VS}(\bar{H})=\{\varphi_\infty\}$ is a singleton and $w(x,t)$ converges uniformly to $\varphi_\infty$ as $t\to+\infty$ for all $\varphi$ satisfying (a) and (b) in Item (1-1) of Main Result \ref{thmr2}.
\end{corollary}

\begin{example}
Let $\bar{H}(x,u,p)=-u+\frac{1}{2}(p+p_0)^2+\cos x$, $x\in\mathbb{S}$, $p_0\notin [-1,1]$.
It is clear that $\bar{H}$ satisfies (H1), (H2), (H3) and (A). Moreover, the contact vector field associated with $\bar{H}(x,u,p)$ has no singular points. By  Corollary \ref{ad-c}, we deduce that the following equation:
\begin{equation}\label{hj777}
	-u+\frac{1}{2}(Du+p_0)^2+\cos x=0
\end{equation}
admits a unique viscosity solution, and thus the convergence result holds true.
\end{example}

The discounted Hamilton-Jacobi equation is a specific example of Hamilton-Jacobi equations whose Hamiltonian $H(x,u,p)$ is strictly increasing in the argument $u$. The vanishing discount problem for discounted Hamilton-Jacobi equations has been widely studied by using different approaches, see  \cite{Dav,Dav1,Go,Go1,i1,i2,It,MT,S}. Recently, some new processes have been made on the vanishing contact structure problem which is a generalization of the vanishing discount problem, see \cite{CCIZ,ZC}.
We refer the readers to \cite{MS} (see also \cite{MK}) for the Aubry-Mather theory for conformally symplectic systems which are closely related to discounted Hamilton-Jacobi equations.

\subsection{Strategy of the proofs of main results}

We will sketch the strategy of the proofs of main results in this part.
%
%
Let
 \[
 H(x,u,p):=\bar{H}(x,-u,-p),\quad \forall (x,u,p)\in T^*M\times\R.
 \]
Since $\bar{H}$ satisfies (H1), (H2) and (H3), then $H$ satisfies (H1), (H2) and the following condition.
\begin{itemize}
	\item [\textbf{(H3')}] {\it Moderate increasing}: there is a constant $\lambda>0$ such that for every $(x,u,p)\in T^{\ast}M\times\R$,
		\begin{equation*}
		0<\frac{\partial H}{\partial u}(x,u,p)\leq \lambda,
		\end{equation*}
\end{itemize}
By definition,  $\bar{H}$ is admissible if and only if $H$ is admissible. By \cite[Appendix B]{WWY2}, the admissibility of ${H}$ is equivalent to
\begin{itemize}
\item [\textbf{(S)}] {\it Solvability}:  $H(x,u,Du)=0$ has a viscosity solution.
\end{itemize}

Denote by $\{T^-_t\}_{t\geq 0}$ (resp. $\{\bar{T}^-_t\}_{t\geq 0}$) the backward solution semigroup associated with $H$ (resp. $\bar{H}$), by $\{T^+_t\}_{t\geq 0}$ (resp. $\{\bar{T}^+_t\}_{t\geq 0}$) the forward solution semigroup associated with $H$ (resp. $\bar{H}$), see Subsection 2.2 below for the definitions of $T^{\pm}_t$.

The following result \cite[Proposition 2.8]{WWY2} is the starting point of  the main results in this paper.
\begin{proposition}\label{i}
Let $\varphi\in C(M,\R)$. Then
\begin{align}\label{sg}
-T^+_t(-\varphi)=\bar{T}^-_t\varphi,\quad -T^-_t(-\varphi)=\bar{T}^+_t\varphi,\quad \forall t\geq 0.
\end{align}
\end{proposition}
Consider
\begin{align}\label{hj}\tag{HJ$_I$}
H(x,u,Du)=0,\quad x\in M.
\end{align}
The letter $I$ represents that $H$ is increasing with respect to the argument $u$. We use $\mathcal{S}_-$ (resp. $\mathcal{S}_+$) to denote the set of all backward (resp. forward) weak KAM solutions of equation \eqref{hj} (see Definition \ref{bwkam} below). Let $u\in \mathcal{S}_+$. Then $u$
is exactly a viscosity solution of
\[-H(x,u,Du)=0\quad \text{in}\ \ M,\]
and, if $v:=-u$, then $v$ is a viscosity solution of
\[H(x,-u,-Du)=0\quad \text{in}\ \ M.\]

Let   $\mathcal{VS}(H)$ be the set of all viscosity solutions of $H(x,u,Du)=0$. Let $\bar{H}(x,t,p):=H(x,-t,-p)$. Then we have
\[\mathcal{S}_+=\mathcal{VS}(-H),\ -\mathcal{S}_+=\mathcal{VS}(\bar{H}),\ \mathcal{S}_-=\mathcal{VS}(H),\ -\mathcal{S}_-=\mathcal{VS}(-\bar{H}).\]

For evolutionary equations, it is well known that $u:=T_t^+\varphi$ is a unique viscosity solution to the Cauchy problem
\begin{equation*}
	\begin{cases}
u_t-{H}(x,u,u_x)=0,\quad (x,t)\in M\times (0,+\infty),\\
u(x,0)=\varphi(x), \quad x\in M,
\end{cases}
\end{equation*}
and that $v:=-T_t^+\varphi$ is a unique viscosity solution to the Cauchy problem
\begin{equation*}
	\begin{cases}
v_t+{H}(x,-v,-v_x)=0,\quad (x,t)\in M\times (0,+\infty),\\
v(x,0)=-\varphi(x), \quad x\in M.
\end{cases}
\end{equation*}

{\bf Consequently, in order to study viscosity solutions of equation \eqref{the00}}, {\bf we only need to study forward weak KAM solutions of equation \eqref{hj}}.

\begin{remark}\label{rem11}
 Under assumptions (H1), (H2), (H3') and (S), $\mathcal{S}_-$ is a singleton.
It is clear that if $\mathcal{S}_+$ is non-empty, then it may not be a singleton. Consider the following example from \cite{WWY2},
\begin{equation}\label{conterex}
{u}+\frac{1}{2}|D{u}|^2=0,\quad x\in\mathbb{S},
\end{equation}
where $\mathbb{S}:=(-\frac 12,\frac 12]$ denotes the unit circle.
Let $u_1$ be the even 1-periodic extension of $u(x)=-\frac 12 x^2$ in $[0,\frac 12]$.
Then both $u_1$ and $u_2\equiv 0$ are forward weak KAM solutions of equation \eqref{conterex}.
\end{remark}

\subsubsection{Outline of the proof of Main Result \ref{thmr1}}

In view of Remark \ref{rem11}, in order to prove Main Result \ref{thmr1}, it suffices to show Main Propositions \ref{ii}, \ref{iii}, \ref{vi},   \ref{345}  below.

\medskip

By Proposition \ref{pairxx}, under assumptions (H1), (H2) and (H3'), condition (S) is a sufficient condition for the non-emptiness of $\mathcal{S}_+$. Here, we attempt to show that condition (S) is also a necessary condition.
\begin{Prop}[\bf Existence]\label{ii}
The set $\mathcal{S}_+$ is non-empty if and only if condition (S) holds true.
\end{Prop}
By \cite[Theorem B.1]{WWY2} condition (S) holds if and only if the set $\mathcal{S}_-$ is non-empty.
So, we have

\medskip
\centerline{$\mathcal{S}_-\neq\emptyset$ $\iff$$\mathcal{S}_+\neq\emptyset$ $\iff$ condition (S) holds true.}

\bigskip

{\em From now on to the end of this section, we assume that condition (S) holds true}.

\bigskip
By \cite[Proposition A.1, Theorem B.1]{WWY2}, $\mathcal{S}_-$ is a singleton.
Denote by $u_-$ the unique backward weak KAM solution of equation \eqref{hj}, by $v_+$ an arbitrary forward weak KAM solution of equation \eqref{hj}.
Recall that each $v_+$ is Lipschitz continuous on $M$ \cite[Lemma 4.1]{WWY2}. Thus, $v_+\in W^{1,\infty}(M)$. Define
\[
\|v_+\|_{W^{1,\infty}(M)}:=\esssup_M\left(|v_+|+|Dv_+|\right).
\]

\begin{Prop}[\bf Compactness]\label{iii}
There is a constant $B>0$ such that $\|v_+\|_{W^{1,\infty}(M)}\leq B$ for all $v_+\in\mathcal{S}_+$.
\end{Prop}
More precisely, we show that there exist  constants $K>0$ and $\kappa>0$ such that  for each $v_+\in \mathcal{S}_+$, $\|v_+\|_\infty\leq K$ and $v_+$ is $\kappa$-Lipschitz continuous on $M$.

It is clear that $(C(M,\mathbb{R}),\leq)$ is a partially ordered set, where $v'_+\leq v''_+$ if and only if $v'_+(x)\leq v''_+(x)$ for all $x\in M$.

\begin{definition}
$v_+\in \mathcal{S}_+$ is called a minimal forward weak KAM solution of equation \eqref{hj}, if $v_+$ is a minimal element of $(\mathcal{S}_+,\leq)$, i.e., for all $v'_+\in\mathcal{S}_+$, $v'_+\leq v_+$ implies $v'_+=v_+$ everywhere.	
\end{definition}

\begin{Prop}[\bf Maximal and minimal elements]\label{vi}
Let $u_+:=\max_{v_+\in\mathcal{S}_+}v_+$. Then $u_+\in \mathcal{S}_+$.
The set of all minimal forward weak KAM solutions of equation \eqref{hj} is non-empty.
\end{Prop}

Let $\mathcal{I}_{v_+}:=\{x\in M\ |\ v_+(x)=u_-(x)\}$ for each $v_+\in \mathcal{S}_+$. We showed in \cite{WWY2} that $\mathcal{I}_{v_+}$ is a non-empty compact subset of $M$. We provide a representation formula for forward weak KAM solutions of equation \eqref{hj} as follows.
\begin{Prop}[\bf Representation formula]\label{four2}
	Let $v_+\in \mathcal{S}_+$. Define
	\[
	m(x,\xi):=\limsup_{y\to \xi}\sup_{\tau>0}h^{y,v_+(y)}(x,\tau),\quad x\in M, \quad\xi\in \mathcal{I}_{v_+}.
	\]
	Then
	\[
	v_+(x)=\sup_{\xi\in \mathcal{I}_{v_+}}m(x,\xi),\quad x\in M.
	\]
\end{Prop}

Any forward weak KAM solution $v_+$ is more regular than Lipschitz on $\mathcal{I}_{v_+}$.
\begin{Prop}[\bf Regularity]\label{3}
	Let $v_+\in \mathcal{S}_+$. Then $v_+$ is of class $C^{1,1}$ on $\mathcal{I}_{v_+}$.
\end{Prop}

\begin{Prop}[\bf Comparision]\label{345} Let $v'_+$, $v''_+\in \mathcal{S}_+$. Then
\begin{itemize}
\item [(1)] if $v'_+\leq v''_+$ everywhere, then $\emptyset\neq\mathcal{I}_{v'_+}\subset \mathcal{I}_{v''_+}\subset \mathcal{I}_{u_+}$, where $u_+:=\max_{v_+\in\mathcal{S}_+}v_+$.
\item [(2)] if there is a neighborhood of $\mathcal{I}_{v'_+}$, denoted by $\mathcal{O}$, such that
\[
v''_+|_\mathcal{O}\geq v'_+|_\mathcal{O},
\]
then $v_+''\geq v'_+$ everywhere.
\item [(3)] if
\[
\mathcal{I}_{v''_+}=\mathcal{I}_{v'_+},\quad v''_+|_\mathcal{O}= v'_+|_\mathcal{O},
\]
then $v_+''= v'_+$ everywhere.
\end{itemize}
\end{Prop}

\subsubsection{Outline of the proof of Main Result \ref{thmr2}}

By Proposition \ref{i}, Main Result \ref{thmr2} is a direct consequence of the following result which is concerned with the  long  time behavior of $T^+_t\varphi$ for a class of $\varphi\in C(M,\R)$.

\begin{Prop}[\bf Uniform boundedness]\label{v}
	Given $\varphi\in C(M,\R)$, there hold
\begin{itemize}
\item [(1)] the family $\{T^+_t\varphi\}_{t\geq 0}$ is uniformly bounded on $M$ if and only if $\varphi$ satisfies:
\begin{itemize}
\item [(a')]  $\varphi\leq u_-$ everywhere.
\item [(b')] there exists $x_0\in M$ such that $\varphi(x_0)=u_-(x_0)$.
\end{itemize}
\item [(2)] if one of conditions (a') and (b') is not satisfied, then we have two cases:
\begin{itemize}
\item [(i)] if there is $x_0\in M$ such that $\varphi(x_0)>u_-(x_0)$, then $\lim_{t\to+\infty}T^+_t\varphi(x)=+\infty$ uniformly on $x\in M$.
\item [(ii)] if $\varphi<u_-$ everywhere, then $\lim_{t\to+\infty}T^+_t\varphi(x)=-\infty$.
\end{itemize}
\item [(3)] there is $K>0$ such that for each
$\varphi$ satisfying (a') and (b'), there is $T_{\varphi}>0$ such that

\[
|T^+_t\varphi(x)|\leq K,\quad \forall (x,t)\in M\times [0,+\infty).
\]
\end{itemize}
\end{Prop}

\begin{Prop}[\bf Sufficient conditions for convergence]\label{77iii}
Given $\varphi\in C(M,\R)$, there hold
\begin{itemize}
\item [(1)] for any $\varphi\in C(M,\R)$, if $u_+\leq \varphi\leq u_-$, then $\lim_{t\to +\infty}T^+_t\varphi=u_+$ uniformly on $x\in M$.	
\item [(2)] if $\mathcal{S}_+=\{\varphi_\infty\}$ is a singleton, then $\varphi_\infty=\lim_{t\rightarrow+\infty}T_t^+\varphi$ for all $\varphi$ satisfying (a') and (b').
\item [(3)] for  $\varphi$ satisfying (a') and (b'), if there exists a minimal forward weak KAM solution $v^*_+$ of equation \eqref{hj} such that $\varphi\leq v^*_+$, then $\{T_t^+\varphi\}_{t\geq 0}$ converges uniformly to $v^*_+$ as $t$ tends to infinity.
     \end{itemize}
\end{Prop}

%
%
%
%

The rest of the paper is organized as follows. In Section 2 we recall preliminaries on contact Hamiltonian systems. We give the proofs of Main Result \ref{thmr1} in Section 3, Main Result \ref{thmr2} and Corollary \ref{ad-c}  in Section 4. Appendix A devotes to the relation between weak KAM solutions and solution semigroups.



\section{Preliminaries}
\subsection{Settings}
 We choose, once and for all, a $C^\infty$ Riemannian metric $g$ on $M$. There is a canonical way to associate to it a Riemannian metric on
$TM$ and $T^*M$, respectively. Denote by $d(\cdot,\cdot)$ the distance
function defined by $g$ on $M$. We use the same symbol $\|\cdot\|_x$ to denote the
norms induced by the Riemannian metrics on $T_xM$ and $T^*_xM$ for $x\in M$, and by
$\langle \cdot,\cdot\rangle_x$ the canonical pairing between the tangent space $T_xM$ and the cotangent space $T^*_xM$. $C(M,\mathbb{R})$ stands for  the space of continuous functions on $M$, $\|\cdot\|_\infty$ denotes the supremum norm on it.

In a series of papers \cite{WWY,WWY1,WWY2}, the authors developed a new variational and dynamical approach for contact Hamilton equations

\begin{align}\label{c}\tag{CH}
\left\{
        \begin{array}{l}
        \dot{x}=\frac{\partial H}{\partial p}(x,u,p),\\
        \dot{p}=-\frac{\partial H}{\partial x}(x,u,p)-\frac{\partial H}{\partial u}(x,u,p)p,\qquad (x,u,p)\in T^*M\times\mathbb{R},\\
        \dot{u}=\frac{\partial H}{\partial p}(x,u,p)\cdot p-H(x,u,p).
         \end{array}
         \right.
\end{align}
We refer the readers to \cite{CCWY} for another variational formulation of (\ref{c}),
which was obtained from Lagrangian formalism using an alternative method.
See \cite{CCJWY,WY} for a time-dependent version of the variational principle for contact Hamiltonian systems.

The contact Lagrangian $L(x,u, \dot{x})$ associated to $H(x,u,p)$ is defined by
\[
L(x,u, \dot{x}):=\sup_{p\in T^*_xM}\left\{\langle \dot{x},p\rangle-H(x,u,p)\right\}.
\]
Since $H$ satisfies (H1), (H2) and (H3'), we have:
\begin{itemize}
\item [\textbf{(L1)}] {\it Strict convexity}:  the Hessian  $\frac{\partial^2 L}{\partial {\dot{x}}^2} (x,u,\dot{x})$ is positive definite for all $(x,u,\dot{x})\in TM\times\R$;
    \item [\textbf{(L2)}] {\it Superlinearity}: for every $(x,u)\in M\times\R$, $L(x,u,\dot{x})$ is superlinear in $\dot{x}$;
\item [\textbf{(L3')}] {\it Moderate decreasing}: there is a constant $\lambda>0$ such that for every $(x,u,\dot{x})\in TM\times\R$,
                        \begin{equation*}
                        -\lambda\leq \frac{\partial L}{\partial u}(x,u,\dot{x})<0.
                        \end{equation*}
\end{itemize}

\subsection{Notions and Notations}

Recall implicit variational principles introduced in \cite{WWY} for contact Hamilton's equations (\ref{c}).
	For any given $x_0\in M$, $u_0\in\mathbb{R}$, there exist two continuous functions $h_{x_0,u_0}(x,t)$ and $h^{x_0,u_0}(x,t)$ defined on $M\times(0,+\infty)$ satisfying	
\begin{equation}\label{baacf1}
h_{x_0,u_0}(x,t)=u_0+\inf_{\substack{\gamma(0)=x_0 \\  \gamma(t)=x} }\int_0^tL\big(\gamma(\tau),h_{x_0,u_0}(\gamma(\tau),\tau),\dot{\gamma}(\tau)\big)d\tau,
\end{equation}
\begin{align}\label{2-3}
h^{x_0,u_0}(x,t)=u_0-\inf_{\substack{\gamma(t)=x_0 \\  \gamma(0)=x } }\int_0^tL\big(\gamma(\tau),h^{x_0,u_0}(\gamma(\tau),t-\tau),\dot{\gamma}(\tau)\big)d\tau,
\end{align}
where the infimums are taken among the Lipschitz continuous curves $\gamma:[0,t]\rightarrow M$.
Moreover, the infimums in (\ref{baacf1}) and \eqref{2-3} can be achieved.
If $\gamma_1$ and $\gamma_2$ are curves achieving the infimums \eqref{baacf1} and \eqref{2-3} respectively, then $\gamma_1$ and $\gamma_2$ are of class $C^1$.
Let
\begin{align*}
x_1(s)&:=\gamma_1(s),\quad u_1(s):=h_{x_0,u_0}(\gamma_1(s),s),\,\,\,\qquad  p_1(s):=\frac{\partial L}{\partial \dot{x}}(\gamma_1(s),u_1(s),\dot{\gamma}_1(s)),\\
x_2(s)&:=\gamma_2(s),\quad u_2(s):=h^{x_0,u_0}(\gamma_1(s),t-s),\quad   p_2(s):=\frac{\partial L}{\partial \dot{x}}(\gamma_2(s),u_2(s),\dot{\gamma}_2(s)).
\end{align*}
Then $(x_1(s),u_1(s),p_1(s))$ and $(x_2(s),u_2(s),p_2(s))$ satisfy equations \eqref{c} with
\begin{align*}
x_1(0)=x_0, \quad x_1(t)=x, \quad \lim_{s\rightarrow 0^+}u_1(s)=u_0,\\
x_2(0)=x, \quad x_2(t)=x_0, \quad \lim_{s\rightarrow t^-}u_2(s)=u_0.
\end{align*}

We call $h_{x_0,u_0}(x,t)$ (resp. $h^{x_0,u_0}(x,t)$) a {\it forward (resp. backward) implicit action function} associated with $L$
and the curves achieving the infimums in (\ref{baacf1}) (resp. \eqref{2-3}) minimizers of $h_{x_0,u_0}(x,t)$ (resp. $h^{x_0,u_0}(x,t)$).
The relation between forward and backward implicit action functions is as follows: for any given $x_0$, $x\in M$, $u_0$, $u\in\mathbb{R}$ and $t>0$,  $h_{x_0,u_0}(x,t)=u$ if and only if $h^{x,u}(x_0,t)=u_0$.

Let us recall two  semigroups of operators introduced in \cite{WWY1}.  Define a family of nonlinear operators $\{T^-_t\}_{t\geq 0}$ from $C(M,\mathbb{R})$ to itself as follows. For each $\varphi\in C(M,\mathbb{R})$, denote by $(x,t)\mapsto T^-_t\varphi(x)$ the unique continuous function on $ (x,t)\in M\times[0,+\infty)$ such that
\[
T^-_t\varphi(x)=\inf_{\gamma}\left\{\varphi(\gamma(0))+\int_0^tL(\gamma(\tau),T^-_\tau\varphi(\gamma(\tau)),\dot{\gamma}(\tau))d\tau\right\},
\]
where the infimum is taken among the absolutely continuous curves $\gamma:[0,t]\to M$ with $\gamma(t)=x$.  Let $\gamma$ be a curve achieving the infimum, and $x(s):=\gamma(s)$, $u(s):=T_t^-\varphi(x(s))$, $p(s):=\frac{\partial L}{\partial \dot{x}}(x(s),u(s),\dot{x}(s))$.
Then $(x(s),u(s),p(s))$ satisfies equations (\ref{c}) with $x(t)=x$.

In \cite{WWY1} we proved that $\{T^-_t\}_{t\geq 0}$ is a semigroup of operators and the function $(x,t)\mapsto T^-_t\varphi(x)$ is a viscosity solution of $w_t+H(x,w,w_x)=0$ with $w(x,0)=\varphi(x)$. Thus, we call $\{T^-_t\}_{t\geq 0}$ the \emph{backward solution semigroup}.

Similarly, one can define another semigroup of operators $\{T^+_t\}_{t\geq 0}$, called the \emph{forward solution semigroup}, by

\begin{equation*}\label{fixufor}
T^+_t\varphi(x)=\sup_{\gamma}\left\{\varphi(\gamma(t))-\int_0^tL(\gamma(\tau),T^+_{t-\tau}\varphi(\gamma(\tau)),\dot{\gamma}(\tau))d\tau\right\},
\end{equation*}
where the supremum is taken among the absolutely continuous curves $\gamma:[0,t]\to M$ with $\gamma(0)=x$. Let $\gamma$ be a curve achieving the supremum, and $x(s):=\gamma(s)$, $u(s):=T_{t-s}^+\varphi(x(s))$, $p(s):=\frac{\partial L}{\partial \dot{x}}(x(s),u(s),\dot{x}(s))$.
Then $(x(s),u(s),p(s))$ satisfies equations (\ref{c}) with $x(0)=x$.

Following Fathi \cite{Fat-b}, one can define weak KAM solutions of  equation \eqref{hj}.
\begin{definition}\label{bwkam}
	A function $u\in C(M,\mathbb{R})$ is called a backward weak KAM solution of \eqref{hj} if
	\begin{itemize}
		\item [(i)] for each continuous piecewise $C^1$ curve $\gamma:[t_1,t_2]\rightarrow M$, we have
		\[
		u(\gamma(t_2))-u(\gamma(t_1))\leq\int_{t_1}^{t_2}L(\gamma(s),u(\gamma(s)),\dot{\gamma}(s))ds;
		\]
		\item [(ii)] for each $x\in M$, there exists a $C^1$ curve $\gamma:(-\infty,0]\rightarrow M$ with $\gamma(0)=x$ such that
		\begin{align}\label{cali1}
		u(x)-u(\gamma(t))=\int^{0}_{t}L(\gamma(s),u(\gamma(s)),\dot{\gamma}(s))ds, \quad \forall t<0.
		\end{align}
	\end{itemize}
Similarly, 	a function $u\in C(M,\mathbb{R})$ is called a forward weak KAM solution of of \eqref{hj} if it satisfies (i) and
for each $x\in M$, there exists a $C^1$ curve $\gamma:[0,+\infty)\rightarrow M$ with $\gamma(0)=x$ such that
		\begin{align}\label{cali2}
		u(\gm(t))-u(x)=\int_{0}^{t}L(\gamma(s),u(\gamma(s)),\dot{\gamma}(s))ds,\quad \forall t>0.
		\end{align}
\end{definition}
	We denote by $\mathcal{S}_-$ (resp. $\mathcal{S}_+$) the set of backward (resp. forward) weak KAM solutions of equation \eqref{hj}. By the analogy of \cite{Fat-b}, (i) of Definition \ref{bwkam} reads that $u$  is dominated by $L$, denoted by $u\prec L$. The curves in \eqref{cali1} and \eqref{cali2} are called {\it  $(u,L,0)$-calibrated curves}.

\subsection{Some useful propositions}
Proposition \ref{hupin}, Proposition\ref{repaction} and Proposition \ref{pr4.5} below hold if $H$ satisfies (H1),(H2) and $|\frac{\partial H}{\partial u}|\leq \lambda$.
\begin{proposition}\label{hupin}
For each $(x_0,u_0,x,t)\in M\times\R\times M\times (0,+\infty)$,
\begin{align}
\bar{h}_{x_0,u_0}(x,t)=-h^{x_0,-u_0}(x,t),\quad \bar{h}^{x_0,u_0}(x,t)=-h_{x_0,-u_0}(x,t).
\end{align}
\end{proposition}

\begin{proof}
We only prove the first equality, since the second one is similar. By contradiction, we assume there exists $(y_0,t_0)\in M\times (0,+\infty)$ such that
\[\bar{h}_{x_0,u_0}(y_0,t_0)<-h^{x_0,-u_0}(y_0,t_0).\]
Let $\gm:[0,t_0]\rightarrow M$ be a minimizer of $\bar{h}_{x_0,u_0}(x,t)$ with $\gm(0)=x_0$, $\gm(t_0)=y_0$. Define
\[F(s):=-h^{x_0,-u_0}(\gm(s),s)-\bar{h}_{x_0,u_0}(\gm(s),s)\quad s\in (0,t_0].\]
We add the definition of $F$ at $s=0$ by $F(0)=0$. By Implicit variational principle, we have
 \[\lim_{s\rightarrow 0^+}\bar{h}_{x_0,u_0}(\gm(s),s)=u_0.\]
Similar to \cite[Lemma 3.1 and Lemma 3.2]{WWY}, we have
\[\lim_{s\rightarrow 0^+}h^{x_0,-u_0}(\gm(s),s)=-u_0.\]
Moreover,  $F$ is continuous on $[0,t_0]$. Note that $F(t_0)>0$, one can find $s_0\in [0,t_0)$ such that $F(s_0)=0$ and $F(s)>0$ for each $s\in (s_0,t_0]$. By the definitions of $\bar{h}_{x_0,u_0}(x,t)$ and $h^{x_0,-u_0}(x,t)$, for each $s\in (s_0,t_0]$, there hold
\[\bar{h}_{x_0,u_0}(\gm(s),s)=\bar{h}_{x_0,u_0}(\gm(s_0),s_0)+\int_{s_0}^sL(\gm(\tau),-\bar{h}_{x_0,u_0}(\gm(\tau),\tau),-\dot{\gm}(\tau))d\tau,\]
\[-{h}^{x_0,-u_0}(\gm(s),s)\leq -\bar{h}^{x_0,-u_0}(\gm(s_0),s_0)+\int_{s_0}^sL(\gm(\tau),{h}^{x_0,-u_0}(\gm(\tau),\tau),-\dot{\gm}(\tau))d\tau,\]
which together with $|\frac{\partial H}{\partial u}|\leq \lambda$ implies
\[F(s)\leq \lambda\int_{s_0}^sF(\tau)d\tau.\]
By Gronwall inequality, we have $F(s)\equiv 0$ on $[s_0,t_0]$. This contradiction shows Proposition \ref{hupin} holds.
\end{proof}

Let $\check{S}_{y,u}$ be the set of $w\in C(M\times[0,+\infty),\R)$ satisfying $w(y,0)\leq u$ and $w_t+{H}(x,w,w_x)=0$ in the viscosity sense. Let $\hat{S}^{y,u}$ be the set of $v\in C(M\times[0,+\infty),\R)$ satisfying $v(y,0)\geq u$ and $-v$ is a solution of $v_t+{H}(x,-v,-v_x)=0$ in the viscosity sense. There holds
\begin{proposition}\label{repaction}
Given $y\in M$, $u\in \R$, for any $(x,t)\in M\times(0,+\infty)$,
\[{h}_{y,u}(x,t)=\sup\{w(x,t)\ |\ w\in \check{S}_{y,u}\},\]
\[{h}^{y,u}(x,t)=\inf\{v(x,t)\ |\ v\in \hat{S}^{y,u}\}.\]
\end{proposition}
\begin{proof}
We only need to prove the first equality, since the second one follows from the first one and Proposition \ref{hupin}. Let $w\in \check{S}_{y,u}$ and $\varphi(x):=w(x,0)$. By the monotonicity of $h_{x_0,u_0}(x,t)$ w.r.t. $u_0$,
	\begin{align*}
		w(x,t)=T_t^-\varphi(x)
=\inf_{z\in M}h_{z,\varphi(z)}(x,t)
\leq h_{y,\varphi(y)}(x,t)
\leq h_{y,u}(x,t),
		\end{align*}
which implies $h_{y,u}(x,t)\geq \sup\{w(x,t)\ |\ w\in \check{S}_{y,u}\}$.

On the other hand, we consider the following
\begin{equation}\label{chanww}
	\begin{cases}
w_t+{H}(x,w,w_x)=0,\quad (x,t)\in M\times (0,+\infty),\\
w(x,0)=u+\theta\phi(x), \quad x\in M,
\end{cases}
\end{equation}
where $\theta$ is a positive constant, $\phi\in C(M,\R)$ and $\phi(y)=0$, $\phi(x)>0$ for $x\neq y$. Let $w_\theta(x)$ be the viscosity solution of (\ref{chanww}). Then
\[w_\theta(x,t)=\inf_{z\in M}h_{z,u+\theta\phi(z)}(x,t).\]
We assert that for each $\delta>0$, there exists $\theta:=\theta(\delta)>0$ such that
\[w_\theta(x,t)=\inf_{d(z,y)<\delta}h_{z,u+\theta\phi(z)}(x,t).\]
 Note that $h_{x_0,u_0}(x,t)$ is Lipschitz continuous w.r.t. $x_0$ with Lipschitz constant $\iota:=\iota(u_0,x,t)$. If the assertion is true, for each $\eps>0$, there exists $\delta:=\frac{\eps}{\iota}$ such that
 	\begin{align*}
		h_{y,u}(x,t)&\leq \inf_{d(z,y)<\delta}h_{z,u}(x,t)+\eps\\
&\leq \inf_{d(z,y)<\delta}h_{z,u+\theta\phi(z)}(x,t)+\eps\\
&= w_\theta(x,t)+\eps,
		\end{align*}
which gives rise to
\[h_{y,u}(x,t)\leq \sup\{w_\theta(x,t)\ |\ \theta>0\}\leq \sup\{w(x,t)\ |\ w\in \check{S}_{y,u}\}.\]

It remains to prove the assertion. By contradiction, we assume there exists $\delta_0>0$, for each $\theta>0$, one can find $z_\theta\in M$ with $d(z_\theta, y)\geq \delta_0$ such that
\[w_\theta(x,t)=h_{z_\theta,u+\theta\phi(z_\theta)}(x,t).\]
Note that $\phi(z_\theta)\geq k_0>0$ since $d(z_\theta, y)\geq \delta_0$, it follows that
\[h_{y,u}(x,t)\geq w_\theta(x,t)\geq h_{z_\theta,u+\theta k_0}(x,t).\]
By the reversibility of $h_{x_0,u_0}(x,t)$ \cite[Proposition 3.4]{WWY1},  for each $A>0$, there exists $u_0\in \mathbb{R}$ such that $h_{x_0,u_0}(x,t)>A$ and $A$ is independent of $x_0$ due to the compactness of $M$. Hence, $h_{z_\theta,u+\theta k_0}(x,t)\rightarrow +\infty$ as $\theta\rightarrow +\infty$, which contradicts the boundedness of $h_{y,u}(x,t)$.
\end{proof}

\begin{proposition}\label{pr4.5}
Backward weak KAM solutions and  viscosity solutions  of  equation \eqref{hj} are the same. Moreover,
\begin{itemize}
\item
 [(i)] $u\in\mathcal{S}_-$ if and only if $T^-_tu=u$ for all $t\geq 0$;
 \item [(ii)] $u\in\mathcal{S}_+$ if and only if $T^+_tu=u$ for all $t\geq 0$.
 \end{itemize}
\end{proposition}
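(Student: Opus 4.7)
The plan is to prove the three assertions in order: first the equivalence of viscosity solutions and backward weak KAM solutions, then the fixed-point characterization in (i), and finally the symmetric statement in (ii). Throughout, the main tool is Proposition \ref{pr4.2}, which represents $T^\pm_t u$ as an infimum or supremum of implicit action functions, together with the monotonicity and Markov properties listed in Section 2.2.

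For the first assertion, I would adapt Fathi's argument to the contact setting. Given a viscosity solution $u$ of \eqref{hj}, the domination property $u\prec L$ follows from a standard doubling-variables/penalisation argument applied to smooth subtangents; the key point is that the viscosity inequality $H(x,u(x),p)\leq 0$ at any supergradient $p$ translates, via Legendre duality, into $\langle\dot x,p\rangle-H(x,u(x),p)\leq L(x,u(x),\dot x)$, and this gives (i) of Definition \ref{bwkam} after integration. The existence of a $(u,L,0)$-calibrated curve at each $x\in M$ follows by selecting, for each $t>0$, a minimiser $\gamma_t$ for $T^-_t u(x)=u(x)$ (once we have (i) below) and passing to a locally uniform limit as $t\to+\infty$ using the Tonelli a priori bounds. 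Conversely, if $u\in\mathcal{S}_-$, the standard argument shows $u$ is both a viscosity subsolution (from $u\prec L$) and a viscosity supersolution (from the calibrated curve, which produces a subtangent where the Hamilton-Jacobi equality holds).

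For (i), by Proposition \ref{pr4.2}(1), $T^-_tu(x)=\inf_{y\in M}h_{y,u(y)}(x,t)$. Suppose $u\in\mathcal{S}_-$. From $u\prec L$ combined with the monotonicity property (1)(iii) of the forward implicit action function, one deduces $h_{y,u(y)}(x,t)\geq u(x)$ for all $(y,x,t)$; indeed, if the minimiser defining $h_{y,u(y)}(x,t)$ is $\gamma$, domination along $\gamma$ yields $u(x)-u(y)\leq\int_0^t L(\gamma,u\circ\gamma,\dot\gamma)\,d\tau$, and a Gronwall-type comparison exploiting (L3') lets us replace $u\circ\gamma$ by $h_{y,u(y)}(\gamma,\tau)$. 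Hence $T^-_tu\geq u$. For the reverse inequality, pick a $(u,L,0)$-calibrated curve $\gamma:(-\infty,0]\to M$ with $\gamma(0)=x$, and set $y=\gamma(-t)$; then $\gamma|_{[-t,0]}$ (reparametrised) witnesses $h_{y,u(y)}(x,t)\leq u(x)$ by the minimality property (2) of $h_{y,u_0}$, so $T^-_tu(x)\leq u(x)$. The converse direction is direct: if $T^-_tu=u$ for all $t\geq 0$, the defining identity of $T^-_t$ gives $u\prec L$, and a minimiser in that definition combined with the Markov property and a compactness/limiting argument (as $t\to+\infty$) produces the required $(u,L,0)$-calibrated curve on $(-\infty,0]$.

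Part (ii) is entirely parallel, using the supremum representation $T^+_tu(x)=\sup_y h^{y,u(y)}(x,t)$ and the maximality property (2) of the backward implicit action function. The main obstacle is the implicit/nonlinear coupling between the unknown function and the Lagrangian: whereas in Fathi's classical setting the inequality $u\prec L$ translates directly into $h_{y,u(y)}(x,t)\geq u(x)$, here the action function itself is defined by an implicit equation in which $h_{y,u(y)}(\gamma(\tau),\tau)$ appears inside $L$, so the comparison between $u\circ\gamma$ and $h_{y,u(y)}(\gamma,\tau)$ must be handled via a Gronwall-style argument that uses the one-sided bound (L3'). This comparison lemma is the essential technical step; once it is established, all of (i), (ii), and the calibrated-curve existence in the viscosity-to-weak-KAM direction follow cleanly from Propositions \ref{pr4.1} and \ref{pr4.2}.
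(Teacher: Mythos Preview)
The paper does not actually prove this proposition: it cites \cite{SWY} for the equivalence between viscosity solutions and backward weak KAM solutions and for part (i), and states that (ii) is proved similarly. So there is no in-paper proof to compare against; your outline is broadly the expected one and is consistent with how such results are established in \cite{SWY,WWY1}.

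That said, there is a small circularity in your treatment of the direction ``viscosity solution $\Rightarrow$ backward weak KAM solution''. You obtain domination $u\prec L$ from the subsolution property, and then write that the calibrated curve is found by taking a minimiser of $T^-_t u(x)=u(x)$ ``once we have (i) below''. But the forward direction of (i) presupposes $u\in\mathcal{S}_-$, which is the goal, while the backward direction of (i) presupposes $T^-_t u=u$. From domination (and your Gronwall-type comparison) you only get $T^-_t u\geq u$; the inequality $T^-_t u\leq u$ is not addressed. The standard way to fill this gap uses that $(x,t)\mapsto T^-_t u(x)$ is the unique viscosity solution of $w_t+H(x,w,w_x)=0$ with initial data $u$, while $(x,t)\mapsto u(x)$ is a viscosity supersolution of the same problem because $u$ is a stationary supersolution and $H$ is nondecreasing in $u$ by (H3'); the parabolic comparison principle then yields $T^-_t u\leq u$. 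Once $T^-_t u=u$ is in hand, your limiting argument for the calibrated curve goes through, and the rest of your outline for (i), (ii) and the converse direction is correct.
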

We will provide a proof of Proposition \ref{pr4.5} in Appendix A. In \cite{SWY}, it was shown that

\begin{proposition}\label{pr4.6}
	Assume (H1), (H2), (H3') and (S). For each $\varphi\in C(M,\mathbb{R})$, the uniform limit $\lim_{t\rightarrow +\infty}T^-_t\varphi(x)$ exists. Let $\varphi_\infty(x)=\lim_{t\rightarrow +\infty}T^-_t\varphi(x)$. Then $\varphi_\infty(x)=u_-(x)$ for all $x\in M$, where $u_-$ denotes the unique backward weak KAM solution of equation \eqref{hj}.
\end{proposition}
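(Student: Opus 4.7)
The plan is to exploit the strict $1$-Lipschitz contraction of $\{T^-_t\}_{t \geq 0}$ (Proposition \ref{pr4.1}(3)) against the fixed-point relation $T^-_t u_- = u_-$ given by Proposition \ref{pr4.5}(i), combined with compactness in $C(M,\R)$. First I would set $a(t) := \|T^-_t\varphi - u_-\|_\infty$; the semigroup property together with Proposition \ref{pr4.1}(3) gives $a(t+s) = \|T^-_s(T^-_t\varphi) - T^-_s u_-\|_\infty \leq a(t)$, so $a$ is non-increasing and admits a limit $a^\ast \geq 0$. The whole game is then to rule out $a^\ast > 0$.

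The next step is to show that the orbit $\{T^-_t \varphi\}_{t \geq 1}$ is relatively compact in $C(M,\R)$. Uniform boundedness is immediate from $\|T^-_t \varphi\|_\infty \leq \|u_-\|_\infty + \|\varphi - u_-\|_\infty$. For equi-Lipschitz regularity in $x$, I would write $T^-_t\varphi = T^-_1(T^-_{t-1}\varphi)$ for $t \geq 1$ and use the representation $T^-_1\psi(x) = \inf_{y} h_{y,\psi(y)}(x,1)$ from Proposition \ref{pr4.2}(1), combined with the local Lipschitz continuity of $(y,u_0,x) \mapsto h_{y,u_0}(x,1)$ in the properties of the forward implicit action function. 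This produces a Lipschitz constant for $T^-_1\psi$ depending only on $\|\psi\|_\infty$, hence uniform over the family $\{T^-_{t-1}\varphi\}_{t \geq 1}$.

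Assume $a^\ast > 0$. By Ascoli-Arzel\`a I can extract $t_n \to \infty$ and $\psi \in C(M,\R)$ with $T^-_{t_n}\varphi \to \psi$ uniformly; then $\|\psi - u_-\|_\infty = \lim_n a(t_n) = a^\ast > 0$, so $\psi \neq u_-$. The strict inequality in Proposition \ref{pr4.1}(3) applied at $t=1$ gives $\|T^-_1\psi - u_-\|_\infty < a^\ast$, whereas the (ordinary) $1$-Lipschitz continuity of $T^-_1$ yields $T^-_1\psi = \lim_n T^-_{1+t_n}\varphi$ uniformly, so $\|T^-_1\psi - u_-\|_\infty = \lim_n a(1+t_n) = a^\ast$. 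This is a contradiction, forcing $a^\ast = 0$. Since every subsequential limit of $\{T^-_t\varphi\}$ as $t \to \infty$ must then equal $u_-$ and the family is relatively compact, $T^-_t\varphi \to u_-$ uniformly on $M$, and the limit is the unique backward weak KAM solution by Propositions \ref{pr4.5} and \ref{Un}.

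The only delicate step is the uniform Lipschitz bound in $x$: one must extract from the properties of $h_{y,u_0}(x,t)$ that its Lipschitz constant in $x$ at $t=1$ depends on $u_0$ only through a bounded quantity, so that the infimum over $y$ in Proposition \ref{pr4.2}(1) inherits a uniform Lipschitz constant whenever $\psi$ ranges over a uniformly bounded family. This ultimately rests on the strict convexity and superlinearity of $L$ together with the standard a~priori bounds on minimizers of the implicit variational principle. Once this regularization estimate is in hand, the rest of the argument is a clean contraction-plus-compactness argument and needs no further input.
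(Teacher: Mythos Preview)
The paper does not actually prove Proposition~\ref{pr4.6}; it is listed in Section~2 as a preliminary result imported from \cite{SWY,WWY,WWY1,WWY2}, so there is no ``paper's own proof'' to compare against. That said, your argument is correct and is the natural one given the tools assembled in Section~2: the strict contraction in Proposition~\ref{pr4.1}(3), the fixed-point characterization of $u_-$ in Proposition~\ref{pr4.5}(i), and the Lipschitz regularization through $T^-_1$ via Proposition~\ref{pr4.2}(1).

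Your only concern---the uniform-in-$t$ Lipschitz bound on $x\mapsto T^-_t\varphi(x)$ for $t\geq 1$---is handled exactly the way the paper itself handles the identical issue in Step~3 of the proof of Main Result~\ref{ii}: write
\[
\bigl|T^-_t\varphi(x)-T^-_t\varphi(y)\bigr|
\leq \sup_{z\in M}\bigl|h_{z,T^-_{t-1}\varphi(z)}(x,1)-h_{z,T^-_{t-1}\varphi(z)}(y,1)\bigr|,
\]
and use that $h_{\cdot,\cdot}(\cdot,1)$ is Lipschitz on the compact set $M\times[-K,K]\times M$, where $K$ bounds $\|T^-_{t-1}\varphi\|_\infty$ uniformly in $t$ by your Step~1. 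So the ``delicate step'' you flag is already carried out verbatim elsewhere in the paper, and no further a~priori estimate on minimizers is needed beyond the stated local Lipschitz continuity of the forward implicit action function. The contraction-plus-compactness endgame is clean and complete as written.
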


Under assumptions (H1), (H2), (H3') and (S), by Proposition \ref{pr4.6}, for any given $x_0\in M$, $u_0\in\mathbb{R}$ and $s>0$, we deduce that
\[
\lim_{t\rightarrow +\infty}h_{x_0,u_0}(x,t+s)=\lim_{t\rightarrow +\infty}T^-_th_{x_0,u_0}(x,s)
\]
exists. Thus, we can define a function on $M$ by
\[
h_{x_0,u_0}(x,+\infty):=\lim_{t\rightarrow +\infty}h_{x_0,u_0}(x,t),\quad x\in M.
\]
Moreover, we have
\begin{proposition}\label{pr4.655}
	Assume (H1), (H2), (H3') and (S).  For each $(x_0,u_0)\in M\times\R$, we have $h_{x_0,u_0}(x,+\infty)=u_-(x)$ for all $x\in M$, i.e.,  $h_{x_0,u_0}(x,+\infty)$ is the unique backward weak KAM solution of equation \eqref{hj}.
\end{proposition}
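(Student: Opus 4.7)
The plan is to recognize this as essentially a direct corollary of Propositions \ref{pr4.2} and \ref{pr4.6}, using the Markov/semigroup identity to convert the large-time limit of the implicit action function into the large-time limit of the backward solution semigroup applied to a suitable continuous initial datum.

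Concretely, fix $(x_0,u_0)\in M\times\R$ and any $s>0$. By the local Lipschitz continuity of the implicit action function, the function
\[
\varphi_s(x):=h_{x_0,u_0}(x,s),\qquad x\in M,
\]
belongs to $C(M,\R)$ and is therefore a legitimate initial datum for the backward solution semigroup. Proposition \ref{pr4.2}(2) gives the key identity
\[
T^-_t\varphi_s(x)=T^-_t h_{x_0,u_0}(x,s)=h_{x_0,u_0}(x,t+s),\qquad \forall t>0,\ \forall x\in M.
\]
Applying Proposition \ref{pr4.6} to $\varphi_s$, the left-hand side converges uniformly on $M$ to $u_-$ as $t\to+\infty$. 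Thus $h_{x_0,u_0}(\cdot,t+s)\to u_-$ uniformly on $M$, and since $s>0$ is fixed this is equivalent to $\lim_{\tau\to+\infty}h_{x_0,u_0}(\cdot,\tau)=u_-$. By the definition of $h_{x_0,u_0}(x,+\infty)$ given in the paragraph preceding the statement, this shows $h_{x_0,u_0}(x,+\infty)=u_-(x)$ for all $x\in M$. The final assertion, that this limit is the unique backward weak KAM solution of \eqref{hj}, is then precisely the content of Proposition \ref{Un} combined with Result \ref{Ad}.

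There is no real obstacle; the only point requiring mild care is that one cannot directly take $\varphi=h_{x_0,u_0}(\cdot,0)$ as a continuous initial datum (the implicit action function is only defined on $M\times(0,+\infty)$), which is why the argument begins by freezing an arbitrary $s>0$ and then absorbing it into the time variable at the end. All other steps are purely formal invocations of previously established results.
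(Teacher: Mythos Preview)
Your proof is correct and follows essentially the same approach as the paper: the paper's argument is contained in the paragraph immediately preceding the proposition, where it uses Proposition~\ref{pr4.2}(2) to rewrite $h_{x_0,u_0}(x,t+s)$ as $T^-_t h_{x_0,u_0}(x,s)$ and then invokes Propositions~\ref{pr4.6} and~\ref{Un}. Your write-up simply makes explicit the step of checking that $h_{x_0,u_0}(\cdot,s)\in C(M,\R)$ so that Proposition~\ref{pr4.6} applies.
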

By \cite[Theorem 1.2]{WWY2}, there holds
\begin{proposition}\label{pairxx}
The uniform limit $\lim_{t\rightarrow +\infty}T^+_tu_-$ exists. Let $u_+=\lim_{t\rightarrow +\infty}T^+_tu_-$. Then
$u_+\in \mathcal{S}_+$, $u_-=\lim_{t\rightarrow +\infty}T^-_tu_+$ and $u_+$ is the maximal forward weak KAM solution.
\end{proposition}

For each $v_+\in \mathcal{S}_+$, we define
\[\mathcal{I}_{v_+}:=\{x\in M\ |\ u_{-}(x)=v_{+}(x)\}.\]
It was shown in \cite[Lemma 4.8]{WWY2} that both $u_-$ and $v_+$ are differentiable at $x\in\mathcal{I}_{v_+} $ and with the same derivative. Thus, one can define
\[
\tilde{\mathcal{I}}_{v_+}:=\{(x,u,p):\ x\in \mathcal{I}_{v_+},\ u=u_-(x)=v_+(x),\ p=Du_-(x)=Dv_+(x) \}.
\]
By \cite[Theorem 1.5]{WWY2}, we have
\begin{proposition}\label{omeggaa}
Given $x_0\in M$,
let $\eta:[0,+\infty)\rightarrow M$ be a $(v_+, L, 0)$-calibrated curve with $\eta(0)=x_0$. Let  $v_0:=v_+(x_0)$, $p_0:=\frac{\partial L}{\partial \dot{x}}(x_0,v_0,\dot{\eta}(0)_+)$, where $\dot{\eta}(0)_+$ denotes the right derivative of $\eta(t)$ at $t=0$. Let $\omega(x_0,v_0,p_0)$ be the $\omega$-limit set of $(x_0,v_0,p_0)$. Then
\[\omega(x_0,v_0,p_0)\subset \tilde{\mathcal{I}}_{v_+},\]
where  $\omega(x_0,u_0,p_0)$  denotes the $\omega$-limit  set for $(x_0,u_0,p_0)$.
\end{proposition}

\section{Proof of Main Result \ref{thmr1}}


\subsection{Proof of Main Proposition \ref{ii}}
As mentioned in the introduction, under assumptions (H1), (H2), (H3') and (S), equation \eqref{hj} has a unique backward weak KAM solution $u_-$. By Proposition \ref{pairxx}, there is at least one forward weak KAM solution $u_+:=\lim_{t\to+\infty}T^+_tu_-$ of equation \eqref{hj}, which implies that $\mathcal{S}_+\neq\emptyset$. So, we only need to show that if $\mathcal{S}_+\neq\emptyset$, then condition (S) holds true.

For any $v_+\in\mathcal{S}_+$,  define a subset of $T^*M\times\R$ associated with $v_+$ by
\[
G_{v_+}:=\mathrm{cl}\Big(\big\{(x,v,p): x\ \text{is a point of differentiability of}\ v_+,\  v=v_+(x),\ p=Dv_+(x)\big\}\Big),
\]
where $\mathrm{cl}(A)$ denotes the closure of $A\subset T^*M\times \R$. Let $\tilde{\Sigma}_{v_+}:=\bigcap_{t\geq 0}\Phi_{t}(G_{v_+})$.
Since $v_+$ is Lipschitz continuous \cite[Lemma 4.1]{WWY2}, then $G_{v_+}$ is well defined and it is a compact subset of $T^*M\times\R$. Let $\Phi_t$ denote the local flow of (\ref{c}) generated by $H(x,u,p)$. Recall that $G_{v_+}$ is invariant by $\Phi_t$ for each $t\geq 0$.
Note that for $s<0$, we have
\begin{align*}
\Phi_s(\tilde{\Sigma}_{v_+})=\Phi_s\left(\bigcap_{t\geq 0}\Phi_{t}(G_{v_+})\right)=\bigcap_{t\geq 0}\Phi_{t+s}(G_{v_+})\subset \bigcap_{t\geq 0}\Phi_{t}(G_{v_+})=\tilde{\Sigma}_{v_+}.
\end{align*}
So, it is a fact that $\tilde{\Sigma}_{v_+}$ is a non-empty, compact and  $\Phi_t$-invariant subset of $T^*M\times\R$.
Let $\Sigma_{v_+}:=\pi\tilde{\Sigma}_{v_+}$, where $\pi:T^*M\times\R\rightarrow M$ denotes the orthogonal projection.

To show condition (S) holds true, we proceed in three steps.

\medskip

\noindent {\bf Step 1}: For each $t\geq 0$, $T_t^-v_+\geq v_+$ everywhere.
\medskip

\noindent It is clear that $T_0^-v_+=v_+$. For $t>0$,  we have
\[
T^{-}_tv_+(x)=\inf_{y\in M}h_{y,v_+(y)}(x,t),\quad \forall x\in M.
\]
Thus, in order to prove $T_t^-v_+\geq v_+$ everywhere, it is sufficient to show that for each $y\in M$, $h_{y,v_+(y)}(x,t)\geq v_+(x)$ for all $(x,t)\in M\times (0,+\infty)$.
For any given $(x,t)\in M\times (0,+\infty)$, let $u(y):=h_{y,v_+(y)}(x,t)$ for all $y\in M$. Then $v_+(y)=h^{x,u(y)}(y,t)$. Since
\[
v_+(y)=T_t^+v_+(y)=\sup_{z\in M}h^{z,v_+(z)}(y,t),
\]
which implies $v_+(y)\geq h^{x,v_+(x)}(y,t)$, i.e., $h^{x,u(y)}(y,t)\geq h^{x,v_+(x)}(y,t)$. By the monotonicity of backward implicit action functions, we have $u(y)\geq v_+(x)$ for all $y\in M$, i.e., $h_{y,v_+(y)}(x,t)\geq v_+(x)$ for all $y\in M$.

\medskip
\noindent  {\bf Step 2}:  For each $t\geq 0$, $T_t^-v_+=v_+$ on $\Sigma_{v_+}$.
\medskip

\noindent By Step 1, we only need to prove $T_t^-v_+\leq v_+$ on $\Sigma_{v_+}$ for each $t>0$. For any $x\in\Sigma_{v_+}$, let $v:=v_+(x)$. Then there exists  $p\in T_x^*M$ such that $(x,v,p)\in \tilde{\Sigma}_{v_+}$. Fix $t>0$, let $(x(s),v(s),p(s)):=\Phi_{s-t}(x,v,p)$ with $(x(t),v(t),p(t))=(x,v,p)$ for $s\in\R$.  We assert that
\begin{equation}\label{huxuxu}
h_{x(s),v_+(x(s))}(x,t-s)=v,\quad \forall 0\leq s<t.
\end{equation}
If the assertion is true, then we have
\[
T_t^-v_+(x)=\inf_{y\in M}h_{y,v_+(y)}(x,t)\leq h_{x(0),v_+(x(0))}(x,t)=v=v_+(x).
\]
Now we prove assertion \eqref{huxuxu}.
The invariance of $\tilde{\Sigma}_{v_+}$ implies $v(s)=v_+(x(s))$ for all $s\in\R$. It remains to show $h_{x(s),v(s)}(x,t-s)=v$, equivalently, $h^{x,v}(x(s),t-s)=v(s)$. By the maximality of $h^{x,v}(x(s),t-s)$, we deduce that $h^{x,v}(x(s),t-s)\geq v(s)$. Assume by contradiction that there exists $s\in[0,t)$ such that $h^{x,v}(x(s),t-s)>v(s)$. Let $\gm:[0,t-s]\rightarrow M$ be a minimizer of $h^{x,v}(x(s),t-s)$ with $\gm(t-s)=x$ and $\gm(0)=x(s)$. Let $F(\sigma):=h^{x,v}(\gm(\sigma),t-s-\sigma)-v_+(\gm(\sigma))$, for $\sigma\in [0,t-s]$.  Since $F(\sigma)$ is continuous, $F(0)>0$ and $F(t-s)=0$, then one can find $s_0\in (0,t-s]$ such that $F(s_0)=0$ and $F(\sigma)>0$ for $\sigma\in [0,s_0)$. Note that
\begin{align*}
h^{x,v}(\gm(\sigma),t-s-\sigma)&=h^{x,v}(\gm(s_0),t-s-s_0)-\int^{s_0}_\sigma L(\gm(\tau),h^{x,v}(\gm(\tau),t-s-\tau),\dot{\gm}(\tau))d\tau,\\
v_+(\gm(s_0))&\leq v_+(\gm(\sigma))+\int^{s_0}_\sigma L(\gm(\tau),v_+(\gm(\tau)),\dot{\gm}(\tau))d\tau.
\end{align*}
It follows that
\[
F(\sigma)\leq \lambda\int^{s_0}_\sigma F(\tau)d\tau,
\]
which implies $F(\sigma)=0$ for all $\sigma\in [0,s_0)$. In particular, $F(0)=0$ in contradiction with $F(0)>0$.

\medskip
\noindent  {\bf Step 3}:
For $T_t^-v_+$, we have
\begin{itemize}
\item {\it Uniform boundedness}:  there exists a constant $K_1>0$ independent of $t$ such that for $t>1$,  \[\|T_t^-v_+\|_\infty\leq K_1;\]
\item {\it Equi-Lipschitz continuity}:  there exists a constant $\kappa_1>0$ independent of $t$ such that for $t> 2$, the function $x\mapsto T_t^-v_+(x)$ is $\kappa_1$-Lipschitz continuous on $M$.
\end{itemize}
\medskip
We prove the uniform boundedness first. By Step 1 and the compactness of $M$, $\{T_t^-v_+\}_{t\geq 0}$ is uniformly bounded from below.
On the other hand, for any given $y\in {\Sigma}_{v_+}$ and $t>1$, from Step 2 we get
\begin{align*}
T_t^-v_+(x)=T_1^-\circ T_{t-1}^-v_+(x)
=\inf_{z\in M}h_{z,T_{t-1}^-v_+(z)}(x,1)
\leq h_{y,T_{t-1}^-v_+(y)}(x,1)
= h_{y,v_+(y)}(x,1),
\end{align*}
which implies $\{T_t^-v_+\}_{t>1}$ is uniformly bounded form above. Denote by $K_1>0$ a constant  such that $\|T_t^-v_+\|_\infty\leq K_1$ for all $t>1$.

Then we prove the equi-Lipschitz continuity. Note that
\begin{align*}
|T_t^-v_+(x)-T_t^-v_+(y)|&=|\inf_{z\in M}h_{z,T_{t-1}^-v_+(z)}(x,1)-\inf_{z\in M}h_{z,T_{t-1}^-v_+(z)}(y,1)|\\
&\leq \sup_{z\in M}|h_{z,T_{t-1}^-v_+(z)}(x,1)-h_{z,T_{t-1}^-v_+(z)}(y,1)|.
\end{align*}
Since $h_{\cdot,\cdot}(\cdot,1)$ is Lipschitz on $M\times [-K_1,K_1]\times M$ with some Lipschitz constant $\kappa_1>0$.  It follows that
\[|T_t^-v_+(x)-T_t^-v_+(y)|\leq \kappa_1 d(x,y), \quad \forall t>2.\]

By Step 1 and Step 3, the uniform limit $\lim_{t\rightarrow+\infty}T_t^-v_+$ exists. Define
\[v_-:=\lim_{t\rightarrow+\infty}T_t^-v_+.
\]
It follows  that for any given $t\geq 0$,
\[
\|T_{t+s}^-v_+-T_t^-v_-\|_\infty\leq \|T_s^-v_+-v_-\|_\infty.
\]
Taking $s\rightarrow +\infty$, we have
$T_t^-v_-=v_-$ for all $t\geq 0$.
By Proposition \ref{pr4.5}, condition (S) holds true. This completes the proof of Main Proposition \ref{ii}.

\subsection{Proof of Main Proposition \ref{iii}}

We divide the proof into two steps.


\medskip
\noindent  {\bf Step 1}:  There exists a constant $K_2>0$ such that $\|v_+\|_\infty\leq K_2$ for all $v_+\in\mathcal{S}_+$.
\medskip

\noindent By Proposition \ref{pairxx}, we get $v_+\leq u_+$ for all $v_+\in\mathcal{S}_+$. It is clear that $\{v_+\}_{v_+\in\mathcal{S}_+}$ is uniformly bounded from above. By Step 2 in the proof of Main Proposition \ref{ii}, for each $\bar{x}\in \Sigma_{v_+}$, we have $v_+(\bar{x})=u_-(\bar{x})$. Thus, for any given $\bar{x}\in \Sigma_{v_+}$, we have
\begin{align*}
v_+(x)=T_1^+v_+(x)=\sup_{z\in M}h^{z,v_+(z)}(x,1)\geq h^{\bar{x},v_+(\bar{x})}(x,1)= h^{\bar{x},u_-(\bar{x})}(x,1).
 \end{align*}
From the compactness of $M$, we deduce that $\{v_+\}_{v_+\in\mathcal{S}_+}$ is uniformly bounded from below. Denote by $K_2>0$ a constant  such that $\|v_+\|_\infty\leq K_2$ for all $v_+\in\mathcal{S}_+$.

\medskip
\noindent  {\bf Step 2}: There exists a constant $\kappa_2>0$ such that $v_+$ is $\kappa_2$-Lipschitz continuous on $M$ for all $v_+\in \mathcal{S}_+$.
\medskip

\noindent For each $x$, $y\in M$, let $\gamma:[0,d(x,y)]\to M$ be a geodesic of length $d(x,y)$, parameterized by arclength and connecting $x$ to $y$. Let
	\[
	\kappa_2:=\sup\{L(x,u,\dot{x})\ |\ x\in M,\ |u|\leq K_2,\ \|\dot{x}\|_x=1\}.
	\]
	Since $\|\dot{\gamma}(s)\|_{\gamma(s)}=1$ for all $s\in[0,d(x,y)]$ and $\|v_+\|_\infty\leq K_2$, we have
	 \[
	 L(\gamma(s),v_+(\gamma(s)),\dot{\gamma}(s))\leq \kappa_2,\quad \forall s\in[0,d(x,y)].
	 \]
	Since $v_+\prec L$, we have
	\begin{align*}
	v_+(y)-v_+(x)&\leq \int_0^{d(x,y)}L(\gamma(s),v_+(\gamma(s)),\dot{\gamma}(s))ds\leq\kappa_2\, d(x,y).
	\end{align*}
	We finish the proof  of Step 2 by exchanging the roles of $x$ and $y$.
{\subsection{Proof of Main Proposition \ref{vi}}
Before proving Main Result \ref{vi}, we need to show some preliminary results first. Note that in this paper $H$ satisfies (H1), (H2) and (H3'). Lemma \ref{hinequ}, Lemma \ref{nonex2} and Proposition \ref{nonex} below hold true under weaker conditions:  (H1), (H2) and $|\frac{\partial H}{\partial u}|\leq \lambda$ for all $(x,u,p)\in T^*M\times \mathbb{R}$.

\begin{lemma}\label{hinequ}
For any given $x\in M$ and $t>0$, $x_0\in M$, $u,v\in\R$, we have
\begin{equation*}
|h_{x_0,u}(x,t)-h_{x_0,v}(x,t)|\leq e^{\lambda t}|u-v|.
\end{equation*}
\end{lemma}
\begin{proof}
The monotonicity of $h_{x_0,u}(x,t)$ with respect to $u$ shows that if $u\geq v$, then $h_{x_0,u}(x,t)\geq h_{x_0,v}(x,t)$.

Since $u,v\in\R$, then we have the dichotomy: a) $u\leq v$, b) $u>v$. For Case a), we have $h_{x_0,u}(x,t)\leq h_{x_0,v}(x,t)$. Let $\gm_u$ be a minimizer of $h_{x_0,u}$ with $\gm_u(0)=x_0$ and $\gm_u(t)=x$. From the monotonicity of $h_{x_0,u}(x,t)$, it follows that for any $s\in (0,t]$,
\begin{equation}\label{tpro}
h_{x_0,u}(\gm_u(s),s)\leq h_{x_0,v}(\gm_u(s),s).
\end{equation}
In terms of the definition of $h_{x_0,u}(x,t)$, we have
\begin{align*}
&h_{x_0,v}(\gm_u(s),s)- h_{x_0,u}(\gm_u(s),s)\\
\leq &v-u+\int_0^sL(\gm_u(\tau),h_{x_0,v}(\gm_u(\tau),\tau),\dot{\gm}_u(\tau))
-L(\gm_u(\tau),h_{x_0,u}(\gm_u(\tau),\tau),\dot{\gm}_u(\tau))d\tau,\\
\leq & v-u+\int_0^s\lambda|h_{x_0,v}(\gm_u(\tau),\tau)- h_{x_0,u}(\gm_u(\tau),\tau)|d\tau.
\end{align*}
Let $F(\tau):=h_{x_0,v}(\gm_u(\tau),\tau)- h_{x_0,u}(\gm_u(\tau),\tau)$. It follows from (\ref{tpro}) that $F(\tau)\geq 0$ for any $\tau\in (0,t]$. Hence, we have
\[F(s)\leq v-u+\int_0^s\lambda F(\tau)d\tau.\]
By Gronwall's inequality, it yields
\begin{equation*}
F(s)\leq (v-u)e^{\lambda s}.
\end{equation*}
In particular, we verify Lemma \ref{hinequ} for Case a).

For Case b), we have $h_{x_0,u}(x,t)\geq h_{x_0,v}(x,t)$. Let $\gm_v$ be a minimizer of $h_{x_0,v}$ with $\gm_v(0)=x_0$ and $\gm_v(t)=x$. Let $G(\tau):=h_{x_0,u}(\gm_u(\tau),\tau)- h_{x_0,v}(\gm_u(\tau),\tau)$. By a similar argument as Case a), we have
\begin{equation*}
G(s)\leq (u-v)e^{\lambda s},
\end{equation*}
which completes the proof of the lemma.
\end{proof}

\begin{lemma}\label{nonex2}Given any $\varphi$, $\psi\in C(M,\mathbb{R})$,  we have
$\|T_t^-\varphi-T_t^-\psi\|_\infty\leq e^{\lambda t}\|\varphi-\psi\|_\infty$, \quad $\forall t\geq 0$.
\end{lemma}
\begin{proof}
Recall that for $t>0$ and each $x\in M$, we have
\begin{equation}\label{thphi}
T_t^-\varphi(x)=\inf_{y\in M}h_{y,\varphi(y)}(x,t),
\end{equation}
\begin{equation}\label{thpsi}
T_t^-\psi(x)=\inf_{z\in M}h_{z,\psi(z)}(x,t).
\end{equation}
Note that $h_{y,\varphi(y)}(x,t)$ is continuous with respect to $y$. Based on the compactness of $M$, the infimums in (\ref{thphi}) and (\ref{thpsi}) can be attained at $y_0$ and $z_0$ respectively. On one hand, by Lemma \ref{hinequ} we have
\begin{align*}
T_t^-\varphi(x)-T_t^-\psi(x)
\leq h_{z_0,\varphi(z_0)}(x,t)-h_{z_0,\psi(z_0)}(x,t)
\leq  e^{\lambda t}|\varphi(z_0)-\psi(z_0)|
\leq e^{\lambda t}\|\varphi-\psi\|_{\infty}.
\end{align*}
On the other hand, by Lemma \ref{hinequ} again we have
\begin{align*}
T_t^-\varphi(x)-T_t^-\psi(x)
\geq h_{y_0,\varphi(y_0)}(x,t)-h_{y_0,\psi(y_0)}(x,t)
\geq  -e^{\lambda t}|\varphi(y_0)-\psi(y_0)|
\geq -e^{\lambda t}\|\varphi-\psi\|_{\infty}.
\end{align*}
Hence,
\[\|T_t^-\varphi(x)-T_t^-\psi(x)\|_{\infty}\leq e^{\lambda t}\|\varphi-\psi\|_{\infty}.\]
This completes the proof of the lemma.
\end{proof}

\begin{proposition}\label{nonex}Given any $\varphi$, $\psi\in C(M,\mathbb{R})$,  we have
$\|T_t^+\varphi-T_t^+\psi\|_\infty\leq e^{\lambda t}\|\varphi-\psi\|_\infty$, $\forall t\geq 0$.
\end{proposition}

\begin{proof}

 It is clear that the proposition holds for $t=0$. Note that $\bar{H}(x,u,p)$ satisfies (H1), (H2) and $|\frac{\partial \bar{H}}{\partial u}|\leq \lambda$. By \eqref{sg},
for each $\varphi\in C(M,\R)$, there holds
\begin{align*}
-T^+_t(-\varphi)=\bar{T}^-_t\varphi,\quad \forall t\geq 0,
\end{align*}
where $\bar{T}^-_t$ denotes the backward solution semigroup associated to $\bar{H}$. For $t>0$, by Lemma \ref{nonex2} we have
\[\|T_t^+\varphi-T_t^+\psi\|_\infty=\|-\bar{T}_t^-(-\varphi)+\bar{T}_t^-(-\psi)\|_\infty\leq e^{\lambda t}\|\varphi-\psi\|_\infty.\]
	
\end{proof}

Under the assumptions (H1), (H2) and (H3'), we will show the existence of minimal forward weak KAM solutions of equation \eqref{hj}. Note that $(\mathcal{S}_+,\leq)$ is a partially ordered set. In view of  Zorn's lemma, if every chain in $\mathcal{S}_+$ has a lower bound in $\mathcal{S}_+$, then $\mathcal{S}_+$ contains at least one minimal element.
So, in order to prove Main Proposition \ref{vi}, it suffices to prove the following result.
\begin{proposition}\label{ttorder}
Let $A$ be a totally ordered subset of $\mathcal{S}_+$. Let $\bar{u}(x):=\inf_{u\in A}u(x)$ for each $x\in M$. Then $\bar{u}\in \mathcal{S}_+$.
\end{proposition}

Before proving the proposition, we show the following lemma.
\begin{lemma}\label{f_un}
Let $\{u_n\}_{n\in \mathbb{N}}\subset \mathcal{S}_+$. If there exists $\bar{u}$ such that $u_n$ converges to $\bar{u}$ pointwise on $M$, then $u_n$ converges to $\bar{u}$ uniformly on $M$. Moreover, $\bar{u}\in \mathcal{S}_+$.
\end{lemma}

\begin{proof}
By Main Proposition \ref{iii}, $\{u_n\}_{n\in\mathbb{N}}$ is equi-Lipschitz on $M$. Thus, we deduce that $u_n$ converges uniformly to $\bar{u}$ as $n\to+\infty$.
To show $\bar{u}\in \mathcal{S}_+$, it suffices to verify $T_t^+\bar{u}=\bar{u}$ for each $t\geq 0$. By Proposition \ref{nonex},
\[\|T_t^+u_n-T_t^+\bar{u}\|_\infty\leq e^{\lambda t}\|u_n-\bar{u}\|_\infty,\]
which means that for each $t\geq 0$, there holds
\[\lim_{n\rightarrow\infty}T_t^+u_n=T_t^+\bar{u}.\]
Hence, $T_t^+\bar{u}=\bar{u}$ follows from $T_t^+u_n=u_n$. This completes the proof of the lemma.
\end{proof}

\begin{proof}[Proof of Proposition \ref{ttorder}]
The result holds true if $A$ is a finite set. We consider the case that $A$ has infinite elements. According to Lemma \ref{f_un}, it suffices to show that there exists a sequence $\{u_n\}_{n\in \mathbb{N}}\subset A$  such that  $u_n\rightarrow\bar{u}$ pointwise on $M$ as $n\to+\infty$.

By Main Proposition \ref{iii}, all $u\in \mathcal{S}_+$ are $\kappa$-equi-Lipschitz continuous. Note that $A\subset \mathcal{S}_+$, we have
\[|\bar{u}(x)-\bar{u}(y)|\leq \sup_{u\in\mathcal{S}_+}|u(x)-u(y)|\leq \kappa |x-y|.\]
It follows from the compactness that $M$ is separable. Namely, one can find a countable dense subset denoted by $U:=\{x_1,x_2,\ldots,x_n,\ldots\}$. We assert that there exist $\{u_n\}_{n\in \mathbb{N}}\subset A$ such that  for a given $n\in \mathbb{N}$ and each $i\in \{1,2,\ldots,n\}$,
\[0\leq u_n(x_i)-\bar{u}(x_i)<\frac{1}{n}.\]
Based on this assertion, one can conclude that $u_n\rightarrow\bar{u}$ pointwise on $M$ as $n\to+\infty$. In fact, for each $x\in M$, there exists a subsequence $V:=\{x_m\}_{m\in \mathbb{N}}\subset U$ such that $|x_m-x|<\frac{1}{m}$. Given $x\in M$ and $n\in \mathbb{N}$, for  each $x_i\in \{x_1,x_2,\ldots,x_n\}\cap V$, up to a rearrangement of $i$, we have
\begin{align*}
|u_n(x)-\bar{u}(x)|&\leq |u_n(x)-u_n(x_i)|+|u_n(x_i)-\bar{u}(x_i)|+|\bar{u}(x)-\bar{u}(x_i)|\\
&\leq 2\kappa |x_i-x|+\frac{1}{n}\\
&\leq \frac{2\kappa}{i}+\frac{1}{n}.
\end{align*}
Let $n\rightarrow \infty$ and $i\rightarrow\infty$ successively. It follows that $u_n\rightarrow\bar{u}$ pointwise on $M$.

It remains to prove the assertion. By the definition of $\bar{u}$, we have $u_n\geq \bar{u}$ on $M$. In the following, we construct $\{u_n\}_{n\in \mathbb{N}}\subset A$ such that  for a given $n\in \mathbb{N}$ and each $i\in \{1,2,\ldots,n\}$,
\[u_n(x_i)-\bar{u}(x_i)<\frac{1}{n}.\]
First of all, for $x_1\in U$, one can choose $v_1\in A$ such that
$v_1(x_1)-\bar{u}(x_1)<1/n$.
Let $x_j\in U$ with $j\leq n$ be the far left point such that $v_1(x_j)-\bar{u}(x_j)\geq 1/n$. Namely, for each $i\leq j-1$, there hold $v_1(x_i)-\bar{u}(x_i)<1/n$. For $x_j$, one can choose $v_2\in A$ such that
$v_2(x_j)-\bar{u}(x_j)<1/n$. Then we have
\[v_2(x_j)<\bar{u}(x_j)+\frac{1}{n}\leq v_1(x_j).\]
Note that $A$ is totally ordered, it yields $v_2\leq v_1$ on $M$. Moreover, for each $i\leq j-1$,
\[v_2(x_i)-\bar{u}(x_i)\leq v_1(x_i)-\bar{u}(x_i)<\frac{1}{n}.\]
Namely, we find an element  $v_2\in A$ such that for each $i\in \{1,\ldots, j\}$, $v_2(x_i)-\bar{u}(x_i)<1/n$. Replace $v_1$ by $v_2$ and repeat the process above, it follows that  there exists $v_k\in A$  with $k\leq n$ such that each $i\in \{1,2,\ldots,n\}$,
\[v_k(x_i)-\bar{u}(x_i)<\frac{1}{n}.\]
Take $u_n=v_k$, then $u_n\in A\subset \mathcal{S}_+$. This completes the proof.
\end{proof}

\medskip

\subsection{Proof of Main Proposition \ref{345}}
\begin{lemma}\label{ooo}
Let $v_+\in \mathcal{S}_+$. Given $x_0\in M$, let $\gm:[0,+\infty)\rightarrow M$ be a $(v_+,L,0)$-calibrated curve with $\gm(0)=x_0$. Let $v_0:=v_+(x_0)$, $p_0:=\frac{\partial L}{\partial \dot{x}}(x_0,v_0,\dot{\gm}(0)_+)$ and $(x(t),v(t),p(t)):=\Phi_t(x_0,v_0,p_0)$ for $t\geq 0$. Then we have $x(t)=\gm(t)$ and $v(t)=v_+(x(t))$ for all $t\geq 0$, and for each $t_2>t_1\geq 0$, there holds
\[
v(t_1)=h^{x(t_2),v(t_2)}(x(t_1),t_2-t_1).
\]
\end{lemma}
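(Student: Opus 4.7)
The plan is to first show that the triple $(\gm(t),v_+(\gm(t)),P(t))$ with $P(t):=\frac{\partial L}{\partial \dot{x}}(\gm(t),v_+(\gm(t)),\dot{\gm}(t))$ is an integral curve of the contact Hamiltonian vector field \eqref{c}, and then invoke uniqueness of solutions to ODEs to identify this lifted curve with $\Phi_t(x_0,v_0,p_0)=(x(t),v(t),p(t))$. Once the first two assertions are established, the identity $v(t_1)=h^{x(t_2),v(t_2)}(x(t_1),t_2-t_1)$ will follow from sandwiching with the maximality property of the backward implicit action function and the fixed-point identity $v_+=T^+_{t_2-t_1}v_+$.

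For the identification step, I will use that $\gm$ is $(v_+,L,0)$-calibrated and that $v_+\prec L$ to argue that, for each $t>0$, the restriction $\gm|_{[0,t]}$ is a minimizer in the sense of the implicit variational principle (Theorem \ref{IVP}): any competing Lipschitz curve $\alpha$ from $x_0$ to $\gm(t)$ satisfies $v_+(\gm(t))-v_+(x_0)\leq \int_0^t L(\alpha(s),v_+(\alpha(s)),\dot\alpha(s))\,ds$ by $v_+\prec L$, while equality is attained along $\gm$ by calibration. Combined with Theorem \ref{IVP}, this forces the lifted curve $(\gm(s),v_+(\gm(s)),P(s))$ to solve \eqref{c}. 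The initial condition $(\gm(0),v_+(\gm(0)),P(0))=(x_0,v_0,p_0)$ matches that of $\Phi_t(x_0,v_0,p_0)$ by construction of $p_0$ and Legendre duality, so uniqueness of the flow yields $x(t)=\gm(t)$ and $v(t)=v_+(x(t))$ for all $t\geq 0$.

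For the action identity, I will establish both inequalities separately. The upper bound $v(t_1)\leq h^{x(t_2),v(t_2)}(x(t_1),t_2-t_1)$ follows directly from the maximality property of $h^{\cdot,\cdot}$ (property (2) in the backward case): the shifted orbit $\{\Phi_{s+t_1}(x_0,v_0,p_0):s\in[0,t_2-t_1]\}$ lies in $S^{x(t_2),v(t_2)}_{x(t_1),t_2-t_1}$ with initial $u$-value $v(t_1)$. For the lower bound, I will combine $v_+=T^+_{t_2-t_1}v_+$ (Proposition \ref{pr4.5}) with Proposition \ref{pr4.2}(1) to get
\[
v(t_1)=v_+(x(t_1))=\sup_{y\in M}h^{y,v_+(y)}(x(t_1),t_2-t_1)\geq h^{x(t_2),v(t_2)}(x(t_1),t_2-t_1),
\]
where the last inequality is obtained by restricting the supremum to $y=x(t_2)$ and using $v_+(x(t_2))=v(t_2)$ from the first part of the lemma.

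I expect the main obstacle to be the verification that the lifted calibrated curve actually solves \eqref{c}, in particular the $\dot{p}$-equation. The calibration formula controls $\dot{u}=L$ and pins down $p$ via Legendre duality, but justifying the momentum equation requires exploiting the minimizing character of $\gm$ through Theorem \ref{IVP}; the subtlety is that the implicit variational principle features the forward implicit action $h_{x_0,v_0}$ as the $u$-argument in $L$ rather than $v_+$ itself, so one must verify that along the minimizing $\gm$ these two quantities coincide, which in turn follows from comparing $v_+\prec L$ with the minimality property and using the uniqueness of the $u_0$ in the reversibility clauses. Once this matching is secured, the rest of the proof is purely formal.
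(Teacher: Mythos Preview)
Your proposal is correct and matches the paper's proof. For the action identity $v(t_1)=h^{x(t_2),v(t_2)}(x(t_1),t_2-t_1)$, your two-sided sandwich argument (maximality of $h^{\cdot,\cdot}$ for $\leq$, and $v_+=T^+_{t_2-t_1}v_+$ together with Proposition~\ref{pr4.2}(1) for $\geq$) is exactly what the paper does. For the identification $x(t)=\gm(t)$, $v(t)=v_+(x(t))$, the paper simply cites \cite[Proposition~4.4]{WWY2} without detail, whereas you outline the underlying mechanism via Theorem~\ref{IVP}; the subtlety you flag---that the implicit action $h_{x_0,v_0}(\gm(s),s)$ must be shown to coincide with $v_+(\gm(s))$ along the calibrated curve---is real and is precisely what that external reference handles, so your plan is on target.
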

\begin{proof}
By similar arguments used in the proof of \cite[Proposition 4.1]{WWY2}, it is not difficult to show that $x(t)=\gm(t)$ and $v(t)=v_+(x(t))$ for all $t\geq 0$.
  By the maximality of $h^{x(t_2),v(t_2)}(x(t_1),t_2-t_1)$, we have
\[
v(t_1)\leq h^{x(t_2),v(t_2)}(x(t_1),t_2-t_1).
\]
On the other hand, since $T_t^+v_+=v_+$ for all $t\geq 0$, we have
\begin{align*}
v(t_1)&=v_+(x(t_1))=T_{t_2-t_1}^+v_+(x(t_1))
=\sup_{y\in M}h^{y,v_+(y)}(x(t_1),t_2-t_1)
\geq  h^{x(t_2),v(t_2)}(x(t_1),t_2-t_1).
\end{align*}
This completes the proof.
\end{proof}
\begin{proof}[Proof of Main Proposition \ref{four2}] Given $\eps>0$, denote by
	\[
	\mathcal{B}_\eps(S):=\left\{x\in M\ |\ d(x,S)<\eps\right\}
	\]
	the $\eps$-neighborhood of $S\subset M$,  where $d(\cdot,\cdot)$ denotes the distance function defined by the Riemannian metric on $M$. Then we have
	\[
	m(x,\xi)=\lim_{\eps\rightarrow 0^+}\sup_{y\in \mathcal{B}_\eps(\xi)}\sup_{\tau>0}h^{y,v_+(y)}(x,\tau),\quad x\in M.
	\]

	Given any $v_+\in\mathcal{S}_+$, for any $\xi\in\mathcal{I}_{v_+}$, it is straightforward to see that $m(x,\xi)$ is well defined.
	
	We first show that for each $v_+\in \mathcal{S}_+$ and each $\xi\in \mathcal{I}_{v_+}$, we have $m(x,\xi)\leq v_+(x)$ for each $x\in M$.  In fact,
	\[
	v_+(x)=T_t^+v_+(x)=\sup_{y\in M}h^{y,v_+(y)}(x,t), \quad \forall (x,t)\in M\times(0,+\infty),
	\]
	which implies for each $y\in M$,
	\[
	v_+(x)\geq \sup_{\tau>0}h^{y,v_+(y)}(x,\tau)\geq m(x,\xi),\quad \forall x\in M,
	\]
	
	Next  we show that for any $x_0\in M$, there exists $\bar{\xi}:=\bar{\xi}(x_0)\in \mathcal{I}_{v_+}$ such that $v_+(x_0)\leq m(x_0,\bar{\xi})$, which together with $m(x,\xi)\leq v_+(x)$ for each $x\in M$, implies that
	\[
	v_+(x)=\sup_{\xi\in \mathcal{I}_{v_+}}m(x,\xi),\quad \forall x\in M.
	\]
	Given $x_0\in M$,  let $\gm:[0,+\infty)\rightarrow M$ be a $(v_+,L,0)$-calibrated curve with $\gm(0)=x_0$. Let $v_0:=v_+(x_0)$, $p_0:=\frac{\partial L}{\partial \dot{x}}(x_0,v_0,\dot{\gm}(0)_+)$ and $(x(t),v(t),p(t)):=\Phi_t(x_0,v_0,p_0)$ for $t\geq 0$.  Recall that $\omega(x_0,v_0,p_0)$ denotes the $\omega$-limit set for $(x_0,v_0,p_0)$. For each $(\bar{\xi},\bar{v},\bar{p})\in \omega(x_0,v_0,p_0)$, there exists a sequence $\{t_n\}_{n\in\mathbb{N}}$ with $t_n\rightarrow+\infty$ as $n\rightarrow+\infty$, such that $\xi_n:=x(t_n)\rightarrow \bar{\xi}$ as $n\rightarrow+\infty$.
	By Lemma \ref{ooo}, we have
	\[v_+(x_0)=v_0=h^{x(t_n),v(t_n)}(x_0,t_n)=h^{\xi_n,v_+(\xi_n)}(x_0,t_n)\leq \sup_{\tau>0}h^{\xi_n,v_+(\xi_n)}(x_0,\tau).\]
	By Proposition \ref{omeggaa}, we get $\bar{\xi}\in \mathcal{I}_{v_+}$. Since $\xi_n\rightarrow \bar{\xi}$ as $n\rightarrow+\infty$, then  for any given $\eps>0$, there is $N\in\mathbb{N}$ such that $\xi_n\in \mathcal{B}_\eps(\mathcal{I}_{v_+})$ for $n>N$. Thus, for $n>N$, we have
	\[
	v_+(x_0)\leq \sup_{\tau>0}h^{\xi_n,v_+(\xi_n)}(x_0,\tau)\leq \sup_{y\in \mathcal{B}_\eps(\bar{\xi})}\sup_{\tau>0}h^{y,v_+(y)}(x_0,\tau).
	\]
	Letting $\eps\to0^+$, we have $v_+(x_0)\leq m(x_0,\bar{\xi})$. This completes the proof of Main Proposition \ref{four2}.
	
\end{proof}

\medskip

\subsection{Proof of Main Proposition \ref{3}}
The fact that $v_+$ and $u_-$ are of class $C^{1,1}$ on $\mathcal{I}_{v_+}$ is a direct consequence of  \cite[Proposition 4.2]{WWY2}.
In particular,  $u_-$ and $u_+$ are of class $C^{1,1}$ on $\mathcal{I}_{u_+}$.
\medskip

\medskip

\subsection{Proof of Main Proposition \ref{345}} For any $v_+\in\mathcal{S}_+$,
 we have
\[
\tilde{\mathcal{I}}_{v_+}\subset \tilde{\mathcal{I}}_{u_+},
\]
which implies $\mathcal{I}_{v_+}\subset \mathcal{I}_{u_+}$. Thus, for each $x\in \mathcal{I}_{v_+}$, we have
$u_-(x)=v_+(x)=u_+(x).$ If $v'_+\in\mathcal{S}_+$ with $v_+\leq v'_+$ everywhere,
then we get $v_+\leq v'_+\leq u_+$ everywhere. It gives rise to
$v'_+(x)=u_+(x)=u_-(x)$ for any $x\in \mathcal{I}_{v_+}$,
which implies Main Proposition \ref{345}(1).

For each $x\in M$, let $\gamma:[0,+\infty)\to M$ be a $(v'_+,L,0)$-calibrated curve with $\gm(0)=x$. By Lemma \ref{ooo}, we have
\[
v'_+(x)=h^{\gamma(t),v'_+(\gamma(t))}(x,t)\]
for all $t>0$.  By Proposition \ref{omeggaa}, there exists $t_0>0$ such that $\gamma(t_0)\in \mathcal{O}$. Thus, we deduce that
\[
v''_+(x)\geq h^{\gamma(t_0),v''_+(\gamma(t_0))}(x,t_0)\geq h^{\gamma(t_0),v'_+(\gamma(t_0))}(x,t_0)=v'_+(x).
\]
Item (3) follows from Item (2) directly. This completes the proof.

\medskip


\section{Proof of Main Result \ref{thmr2}}

\subsection{Proof of Main Proposition \ref{v}}

In order to prove Main Proposition \ref{v}, we show the following lemma first.
\begin{lemma}\label{keiiyy}
Given $x_0\in M$, $v_0\in \R$, $h^{x_0,v_0}(\cdot,\cdot)$ is bounded on $M\times [\delta,+\infty)$ for any given $\delta>0$ if and only if $v_0=u_-(x_0)$. More precisely, there hold
\begin{itemize}
\item [(i)] if $v_0=u_-(x_0)$, then $h^{x_0,v_0}(\cdot,\cdot)$ is bounded on $M\times [\delta,+\infty)$ for any $\delta>0$;
\item [(ii)] if $v_0>u_-(x_0)$, then $\lim_{t\to+\infty}h^{x_0,v_0}(x,t)=+\infty$ uniformly on $x\in M$;
\item [(iii)] if $v_0<u_-(x_0)$, then $\lim_{t\to+\infty}h^{x_0,v_0}(x,t)=-\infty$ uniformly on $x\in M$.
\end{itemize}
\end{lemma}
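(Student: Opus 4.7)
The plan is to reduce all three cases to the uniform convergence $h_{x_0, u_0}(\cdot, t) \to u_-$ on $M$ as $t \to +\infty$ (Proposition \ref{pr4.655}), combined with the reversibility identity $h^{x_0, v_0}(x, t) = u_0 \Leftrightarrow h_{x, u_0}(x_0, t) = v_0$ and strict monotonicity of $u_0 \mapsto h_{x, u_0}(x_0, t)$. The crucial technical step is to upgrade the known pointwise limit to one that is uniform in the starting point: for each fixed $u^* \in \R$,
\[
h_{x, u^*}(x_0, t) \to u_-(x_0) \quad \text{as } t \to +\infty, \text{ uniformly in } x \in M.
\]
To prove this, note that by Proposition \ref{pr4.2}(2) applied to $y \mapsto h_{x, u^*}(y, 1)$, we have $h_{x, u^*}(\cdot, t) = T^-_{t-1}[h_{x, u^*}(\cdot, 1)]$ for $t \geq 1$, so from the $1$-Lipschitz property of $T^-_s$ (Proposition \ref{pr4.1}(3)) and $T^-_s u_- = u_-$,
\[
|h_{x, u^*}(x_0, t) - u_-(x_0)| \leq \|T^-_{t-1}[h_{x, u^*}(\cdot, 1)] - u_-\|_\infty.
\]
The family $\{h_{x, u^*}(\cdot, 1) : x \in M\}$ is equi-Lipschitz and uniformly bounded by local Lipschitz continuity of $h$, hence pre-compact in $C(M, \R)$ by Arzel\`a--Ascoli; since $T^-_s \varphi \to u_-$ uniformly for every $\varphi \in C(M,\R)$ (Proposition \ref{pr4.6}), a finite $\varepsilon$-net argument combined with the contractivity of $T^-_s$ upgrades this pointwise convergence to one that is uniform over the pre-compact family.

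Parts (ii) and (iii) then follow immediately. For (ii), fix $v_0 > u_-(x_0)$ and any $u^* \in \R$; the uniform limit forces $h_{x, u^*}(x_0, t) < v_0$ for all $x \in M$ once $t$ is large (depending only on $u^*$), and strict monotonicity of $u \mapsto h_{x, u}(x_0, t)$ then yields $u^* < h^{x_0, v_0}(x, t)$ uniformly in $x$. Letting $u^* \to +\infty$ gives uniform divergence to $+\infty$. Part (iii) is the symmetric argument with $v_0 < u_-(x_0)$ and $u^* \to -\infty$.

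For part (i), the upper bound is immediate: Proposition \ref{pr4.2}(1) gives $h^{x_0, u_-(x_0)}(x, t) \leq T^+_t u_-(x)$, and Result \ref{pair11} supplies $T^+_t u_- \to u_+$ uniformly, so $\{T^+_t u_-\}_{t \geq 0}$ is uniformly bounded in $C(M, \R)$. For the lower bound, I would invoke a backward-calibrated curve $\xi : (-\infty, 0] \to M$ of $u_-$ with $\xi(0) = x_0$ (Definition \ref{bwkam}). For each $t > 0$, the time-shift $s \mapsto \xi(s - t)$ on $[0, t]$ lifts to a solution of \eqref{c} ending at $(x_0, u_-(x_0))$ and starting at $(\xi(-t), u_-(\xi(-t)))$, so the maximality property of the backward action gives $h^{x_0, u_-(x_0)}(\xi(-t), t) \geq u_-(\xi(-t)) \geq \min_M u_-$. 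Choosing $y = \xi(-t)$ in the Markov property together with monotonicity in the value argument then produces
\[
h^{x_0, u_-(x_0)}(x, t+1) \geq h^{\xi(-t),\, \min_M u_-}(x, 1),
\]
whose right-hand side is uniformly bounded below because $(y, x) \mapsto h^{y, \min_M u_-}(x, 1)$ is continuous on the compact set $M \times M$; values for $t \in [\delta, 1]$ are handled by local Lipschitz continuity of $h$.

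The main obstacle is the uniform-in-starting-point convergence needed for parts (ii) and (iii): since the semigroup $T^-_t$ naturally acts on the target variable rather than the source, converting the pointwise convergence of Proposition \ref{pr4.6} into one that is uniform over the source-indexed family $\{h_{x, u^*}(\cdot, 1)\}_{x \in M}$ requires the combination of pre-compactness of that family in $C(M, \R)$ with the $1$-Lipschitz property of $T^-_t$, via a finite-net argument.
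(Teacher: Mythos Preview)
Your proposal is correct, and for part (i) it is essentially the paper's argument: both use a $(u_-,L,0)$-calibrated curve through $x_0$, the fact that it lifts to an orbit of \eqref{c} (the paper cites \cite[Proposition 4.4]{WWY2} for this), and then the Markov property plus local Lipschitz continuity of $h^{\cdot,\cdot}(\cdot,\cdot)$ to propagate the lower bound to all of $M$; for the upper bound the paper invokes $T^+_t u_-\leq u_-$ from \cite[Lemma 4.5]{WWY2} rather than Result~\ref{pair11}, but either works.

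For parts (ii) and (iii), however, your route differs genuinely from the paper's. The paper argues by contradiction: assuming a bounded subsequence $h^{x_0,v_0}(x_n,t_n)=v_n$, it reverses to $h_{x_n,v_n}(x_0,t_n)=v_0$, extracts limits $(\bar x,\bar v)$, and then proves an ad~hoc uniform-in-$t$ Lipschitz estimate $|h_{x_n,v_n}(x_0,t_n)-h_{\bar x,\bar v}(x_0,t_n)|\leq C\big(d(x_n,\bar x)+|v_n-\bar v|\big)$ (assertion~\eqref{18053}) via a Markov-property trick at time $1$, in order to contradict Proposition~\ref{pr4.655}. Your argument replaces this with a single clean lemma: the uniform-in-source convergence $h_{x,u^*}(x_0,t)\to u_-(x_0)$, obtained from the pre-compactness of $\{h_{x,u^*}(\cdot,1):x\in M\}$ in $C(M,\R)$, the $1$-Lipschitz property of $T^-_t$, and a finite $\varepsilon$-net. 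This is more conceptual, yields the uniform divergence directly rather than by contradiction, and isolates the key fact (equicontinuous dependence of the long-time limit on the initial function) as a reusable statement; the paper's approach, on the other hand, stays closer to the implicit-action calculus and avoids invoking Arzel\`a--Ascoli.
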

\begin{proof}
By \cite[Lemma 4.5]{WWY2}, for each $t\geq 0$, we have $T^+_tu_-\leq u_-$ everywhere.

\medskip

\noindent  {\bf Case (i)}:  For any $(x,t)\in M\times (0,+\infty)$, we get
\[h^{x_0,v_0}(x,t)=h^{x_0,u_-(x_0)}(x,t)\leq T_t^+u_-(x)\leq u_-(x).\]
So, $h^{x_0,v_0}(\cdot,\cdot)$ is bounded from above on $M\times (0,+\infty)$.
Let $\gm:(-\infty,0]\rightarrow M$ be a $(u_-,L,0)$-calibrated curve with $\gm(0)=x_0$. Let $p_0:=\frac{\partial L}{\partial \dot{x}}(x_0,v_0,\dot{\gm}(0)_-)$. Define $(x(-t),u(-t),p(-t)):=\Phi_{-t}(x_0,v_0,p_0)$ for $t\geq 0$.
In view of \cite[Proposition 4.1]{WWY2}, we have $u(-t)=u_-(x(-t))$. We assert that
\begin{equation}\label{jji}
u(-t)=h^{x_0,v_0}(x(-t),t),\quad \forall t>0.
\end{equation}
In fact, by the maximality of $h^{x_0,v_0}(x(-t),t)$, we have $u(-t)\leq h^{x_0,v_0}(x(-t),t)$ for any $t>0$. On the other hand,
\[
u(-t)=u_-(x(-t))\geq T_{t}^+u_-(x(-t))\geq h^{x_0,u_-(x_0)}(x(-t),t)=h^{x_0,v_0}(x(-t),t),\quad \forall t>0.
\]
Hence, assertion (\ref{jji}) is true. By Markov property of $h^{x_0,u_-(x_0)}(x,t)$ and \eqref{jji}, for any $t\geq \delta$, we have
\begin{align*}
h^{x_0,v_0}(x,t)&=h^{x_0,u_-(x_0)}(x,t)\\&=\sup_{y\in M}h^{y,h^{x_0,u_-(x_0)}(y,t-\frac{\delta}{2})}(x,\frac{\delta}{2})\\
&\geq h^{x(-(t-\frac{\delta}{2})),h^{x_0,u_-(x_0)}(x(-(t-\frac{\delta}{2})),t-\frac{\delta}{2})}(x,\frac{\delta}{2})\\
&=h^{x(-(t-\frac{\delta}{2})),u(-(t-\frac{\delta}{2}))}(x,\frac{\delta}{2}).
\end{align*}
Note that $u\left(-(t-\frac{\delta}{2})\right)=u_-\left(x(-(t-\frac{\delta}{2}))\right)$ is bounded on $[\delta,+\infty)$. Since $h^{\cdot,\cdot}(\cdot,\frac{\delta}{2})$ is locally Lipschitz on $M\times \R\times M$, then $h^{x_0,v_0}(\cdot,\cdot)$ is bounded from below on $M\times [\delta,+\infty)$.

\medskip

\noindent  {\bf Case (ii)}:
Assume by contradiction that there exists $C_1>0$ and a sequence $\{(x_n,t_n)\}_{n\in \mathbb{N}}\in M\times (0,+\infty)$ with $t_n\rightarrow +\infty$ as $n\rightarrow+\infty$ such that
$|h^{x_0,v_0}(x_n,t_n)|\leq C_1$. Let $v_n:=h^{x_0,v_0}(x_n,t_n)$ for all $n\in \mathbb{N}$. Then $h_{x_n,v_n}(x_0,t_n)=v_0$ for all $n\in \mathbb{N}$. Passing to a subsequence if necessary, we may suppose that
\[
x_n\rightarrow \bar{x},\quad v_n\rightarrow \bar{v},\quad \text{as}\ n\rightarrow +\infty.
\]
By Proposition \ref{pr4.655}, $\lim_{t\rightarrow+\infty}h_{\bar{x},\bar{v}}(x_0,t)=u_-(x_0)$. In particular,
\begin{equation}\label{iijjii}
\lim_{n\rightarrow+\infty}h_{\bar{x},\bar{v}}(x_0,t_n)=u_-(x_0).
\end{equation}

We assert that there exists a constant $C_2>0$ independent of $n$ such that
 \begin{align}\label{18053}
 \left|v_0-h_{\bar{x},\bar{v}}(x_0,t_n)\right|=\left|h_{x_n,v_n}(x_0,t_n)-h_{\bar{x},\bar{v}}(x_0,t_n)\right|\leq C_2\left(d(x_n,\bar{x})+|v_n-\bar{v}|\right).
 \end{align}
If the assertion is true, then $\lim_{n\rightarrow+\infty}h_{\bar{x},\bar{v}}(x_0,t_n)=v_0>u_-(x_0)$ in contradiction with (\ref{iijjii}).
So, we only need to prove the assertion.
Let $\gm_n:[0,t_n]\to M$ be a minimizer of $h_{x_n,v_n}(x_0,t_n)$. Define $u_n(s):=h_{x_n,v_n}(\gm_n(s),s)$ for $s\in[0,t_n]$. Let
\[
u_{n,1}:=u_n(1),\quad y_{n,1}:=\gm_n(1),\quad \bar{u}_n:=h^{y_{n,1},u_{n,1}}(\bar{x},1).
\]
Then $u_{n,1}=h_{x_n,v_n}(y_{n,1},1)$, or equivalently, $v_n=h^{y_{n,1},u_{n,1}}(x_n,1)$. Note that $v_n\to \bar{v}$ as $n\to+\infty$. Thus, by the local Lipschitz property of $h_{\cdot,\cdot}(\cdot,1)$ and the compactness of $M$, there is a constant $C_3>0$ such that $|u_{n,1}|\leq C_3$ for all $n\in \mathbb{N}$. Note that $h^{\cdot,\cdot}(\cdot,1)$ is Lipschitz on $M\times[-C_3,C_3]\times M$ with a Lipschitz constant $C_4>0$. So, we have
\begin{align}\label{18052}
|v_n-\bar{u}_n|=\left|h^{y_{n,1},u_{n,1}}(x_n,1)-h^{y_{n,1},u_{n,1}}(\bar{x},1)\right|\leq C_4\ d(x_n,\bar{x}).
\end{align}
By the Markov property of forward implicit action functions, the definitions of $u_{n,1}$ and $\bar{u}_n$,  we have
\begin{align}\label{1805}
h_{\bar{x},\bar{u}_n}(x_0,t_n)\leq h_{y_{n,1},h_{\bar{x},\bar{u}_n}(y_{n,1},1)}(x_0,t_n-1)=h_{y_{n,1},u_{n,1}}(x_0,t_n-1)=h_{x_n,v_n}(x_0,t_n).
\end{align}
By the monotonicity of the forward implicit actions and \eqref{18052}, we get
\begin{align*}\label{18051}
	h_{\bar{x},\bar{v}}(x_0,t_n)-h_{\bar{x},\bar{u}_n}(x_0,t_n)\leq|\bar{v}-\bar{u}_n|\leq |\bar{v}-v_n|+|v_n-\bar{u}_n|\leq |\bar{v}-v_n|+C_4\ d(x_n,\bar{x}),
\end{align*}
which together with \eqref{1805}, implies that
\[
h_{\bar{x},\bar{v}}(x_0,t_n)\leq h_{x_n,v_n}(x_0,t_n)+ |\bar{v}-v_n|+C_4\ d(x_n,\bar{x}).
\]
Similarly, one can show that
\[
h_{x_n,v_n}(x_0,t_n)\leq h_{\bar{x},\bar{v}}(x_0,t_n)+ |\bar{v}-v_n|+C_5\ d(x_n,\bar{x})
\]
for some constant $C_5>0$ independent of $n$. Hence, assertion \eqref{18053} holds true.

\medskip

\noindent  {\bf Case (iii)}: By a similar argument used in Case (ii) we can show (iii).
\end{proof}

\medskip

\begin{proof}[Proof of  Main Proposition \ref{v}]
Item (3) in Main Proposition \ref{v} is an easy consequence of Item (2) in  Main Proposition \ref{77iii}.
We will add some lines to show Item (3) in next subsection. See Remark \ref{add} below.

Here, we prove Items (1), (2).  First of all, we  show that if $\varphi$ satisfies (a') and (b'), then the family $\{T^+_t\varphi\}_{t\geq 0}$ is uniformly bounded on $M$.
\medskip

Since $\varphi\leq u_-$ everywhere, by \cite[Lemma 4.4]{WWY2}, then for each $t\geq 0$, $T_t^+\varphi\leq T_t^+u_-\leq u_-$ everywhere. On the other hand, for each $t>0$, \[T_t^+\varphi(x)\geq h^{x_0,\varphi(x_0)}(x,t)=h^{x_0,u_-(x_0)}(x,t).\]
By Lemma \ref{keiiyy}, we deduce that the function $(x,t)\mapsto T_t^+\varphi(x)$ is bounded from below by some constant $C_6$ on $M\times [1,+\infty)$.
It is clear that the function $(x,t)\mapsto T_t^+\varphi(x)$ is bounded from below by some constant $C_7$ on $M\times [0,1]$. Hence, the family $\{T^+_t\varphi\}_{t\geq 0}$ is uniformly bounded on $M$.

Next, we show that $\varphi$ satisfies (a') and (b'), provided the family $\{T^+_t\varphi\}_{t\geq 0}$ is uniformly bounded on $M$. Suppose not. It is now convenient to distinguish two cases.

\medskip

\noindent  { Case (1)}: If $\varphi(x_0)>u_-(x_0)$
for some $x_0\in M$, then from $T_t^+\varphi(x)\geq h^{x_0,\varphi(x_0)}(x,t)$ and  Lemma \ref{keiiyy}, we deduce that for each $x\in M$, $T_t^+\varphi(x)\rightarrow+\infty$ as $t\rightarrow+\infty$, a contradiction.

\medskip

\noindent  { Case (2)}: If $\varphi(y)<u_-(y)$ for all $y\in M$, it follows from Lemma \ref{keiiyy} that for all $x,y\in M$, $h^{y,\varphi(y)}(x,t)\rightarrow-\infty$ as $t\rightarrow+\infty$. Let $k_0:=\min_{x,y\in M}h^{y,\varphi(y)}(x,1)$. For any given $k<k_0$, define
\[
\sigma_k(x,y):=\max\left\{t\ |\ h^{y,\varphi(y)}(x,t)\geq k\right\},\quad x,\, y\in M.
\]
It is clear that $\sigma_k(\cdot,\cdot)$ is continuous on $M\times M$. Since $M$ is compact, then
\[\bar{\sigma}_k:=\max_{(x,y)\in M\times M}\sigma_k(x,y)\] is well defined. It follows that for each $t>\bar{\sigma}_k$, $h^{y,\varphi(y)}(x,t)\leq k$ for all $x$, $y\in M$. Then
\[
T_t^+\varphi(x)=\sup_{y\in M}h^{y,\varphi(y)}(x,t)\leq k, \quad \forall x\in M,
\]
which implies that for each $x\in M$, $T_t^+\varphi(x)\rightarrow-\infty$ as $t\rightarrow+\infty$, a contradiction.
\end{proof}

\subsection{Proof of  Main Proposition \ref{77iii}}

\medskip
\noindent  {\bf Sufficient condition (1)}: Note that $T_t^+u_+=u_+$ and
\[T_t^+u_+\leq T_t^+\varphi\leq T_t^+u_-.\]
Since  $u_+=\lim_{t\rightarrow \infty}T_t^+u_-$. It follows that $\lim_{t\rightarrow \infty}T_t^+\varphi=u_+$.
\medskip

\medskip
\noindent  {\bf Sufficient condition (2)}: We divide the proof into 2 steps.
\medskip

\noindent {\bf Step 1}:
For each $\delta>0$, we show that there is  $\kappa_\delta>0$ such that for each initial data $\varphi$ satisfying (a') and (b'),  the function $x\mapsto T^+_t\varphi(x)$ is $\kappa_\delta$-Lipschitz on $M$ for each $t\geq \delta$.

\medskip

By Item (1) in Main Proposition \ref{v}, there is a constant $K>0$ such that $|T^+_t\varphi(x)|\leq K$ for all $(x,t)\in M\times [0,+\infty)$.
Note that for any $t\geq\delta$, we have
\begin{align*}
\left|T_t^+\varphi(x)-T_t^+\varphi(y)\right|&=\left|\sup_{z\in M}h^{z,T_{t-\frac{\delta}{2}}^+\varphi(z)}(x,\frac{\delta}{2})-\sup_{z\in M}h^{z,T_{t-\frac{\delta}{2}}^+\varphi(z)}(y,\frac{\delta}{2})\right|\\
&\leq \sup_{z\in M}\left|h^{z,T_{t-\frac{\delta}{2}}^+\varphi(z)}(x,\frac{\delta}{2})-h^{z,T_{t-\frac{\delta}{2}}^+\varphi(z)}(y,\frac{\delta}{2})\right|.
\end{align*}
Since $h^{\cdot,\cdot}(\cdot,\frac{\delta}{2})$ is Lipschitz on $M\times [-K,K]\times M$ with some Lipschitz constant $\kappa_\delta$, then
\[
\left|T_t^+\varphi(x)-T_t^+\varphi(y)\right|\leq \kappa_\delta\ d(x,y), \quad \forall t\geq\delta.
\]

\medskip

\noindent {\bf Step 2}: We  show that for  $\varphi$ satisfying (a') and (b'), both
\begin{equation}\label{88iii}
\hat{\varphi}(x):=\limsup_{t\rightarrow+\infty}T_t^+\varphi(x),\quad \check{\varphi}(x):=\liminf_{t\rightarrow+\infty}T_t^+\varphi(x)
\end{equation}
are forward weak KAM solutions of (\ref{hj}).
It is clear to see that if (\ref{88iii}) is true, then Sufficient condition (2) holds.

First, we show $\hat{\varphi}(x)$ is a forward weak KAM solution of (\ref{hj}), i.e.,
$\hat{\varphi}$ satisfies $T_t^+\hat{\varphi}=\hat{\varphi}$ for any $t>0$.
From the uniform boundedness of $\{T^+_t\varphi\}_{t\geq 0}$, it is clear that $\hat{\varphi}(x)$ is well-defined. By definition, we have
\[
\lim_{t\to+\infty}\sup_{s\geq t}T^+_s\varphi(x)=\hat{\varphi}(x),\quad \forall x\in M.
\]
Since
\[
|\sup_{s\geq t}T^+_s\varphi(x)-\sup_{s\geq t}T^+_s\varphi(y)|\leq\sup_{s\geq t}|T^+_s\varphi(x)-T^+_s\varphi(y)|\leq \kappa_\delta d(x,y),
\quad \forall t>\delta>0,
\]
then
\begin{align}\label{5-230}
\lim_{t\to+\infty}\sup_{s\geq t}T^+_s\varphi(x)=\hat{\varphi}(x)
\end{align}
uniformly on $x\in M$.

 It is clear that $T_0^+\varphi=\varphi$. For each $t>0$, we have
\begin{align*}
\hat{\varphi}(x)&=\lim_{\sigma\to+\infty}\sup_{s\geq \sigma}T^+_{s+t}\varphi(x)=\lim_{\sigma\to+\infty}\sup_{s\geq \sigma}\sup_{z\in M}h^{z,T^+_s\varphi(z)}(x,t)\\
&=\lim_{\sigma\to+\infty}\sup_{z\in M}h^{z,\sup_{s\geq \sigma}T^+_s\varphi(z)}(x,t),
\end{align*}
where  the last one follows from the monotonicity of $h^{x_0,u_0}(x,t)$ with respect to $u_0$.
Since
\begin{align*}
|\sup_{z\in M}h^{z,\sup_{s\geq \sigma}T^+_s\varphi(z)}(x,t)-T^+_t\hat{\varphi}(x)|&=|\sup_{z\in M}h^{z,\sup_{s\geq \sigma}T^+_s\varphi(z)}(x,t)-\sup_{z\in M}h^{z,\hat{\varphi}(z)}(x,t)|\\
&\leq\sup_{z\in M}|h^{z,\sup_{s\geq \sigma}T^+_s\varphi(z)}(x,t)-h^{z,\hat{\varphi}(z)}(x,t)|\\
&\leq \hat{l}_t\|\sup_{s\geq \sigma}T^+_s\varphi-\hat{\varphi}\|_\infty
\end{align*}
for $\sigma>0$ large enough, by (\ref{5-230}) we have
\[
\hat{\varphi}(x)=T^+_t\hat{\varphi}(x),\quad \forall x\in M,\ \forall t>0,
\]
where $\hat{l}_t$ is the Lipschitz constant of the function $(x_0,u_0,x)\mapsto h^{x_0,u_0}(x,t)$ on $M\times[-K-\|\hat{\varphi}\|_\infty,K+\|\hat{\varphi}\|_\infty]\times M$. Thus, $T^+_t\hat{\varphi}=\hat{\varphi}$ for any $t>0$.

Then we show $\check{\varphi}$ is also a forward weak KAM solution of (\ref{hj}), i.e.,
$\check{\varphi}$ satisfies $T_t^+\check{\varphi}=\check{\varphi}$ for any $t> 0$.
From the uniform boundedness of $\{T^+_t\varphi\}_{t\geq 0}$, it is clear that $\check{\varphi}(x)$ is well-defined. By definition, we have
\[
\lim_{t\to+\infty}\inf_{s\geq t}T^+_s\varphi(x)=\check{\varphi}(x),\quad \forall x\in M.
\]
Since
\[
|\inf_{s\geq t}T^+_s\varphi(x)-\inf_{s\geq t}T^+_s\varphi(y)|\leq\sup_{s\geq t}|T^+_s\varphi(x)-T^+_s\varphi(y)|\leq \kappa_\delta d(x,y),
\quad \forall t>\delta>0,
\]
then
\begin{align}\label{5-23}
\lim_{t\to+\infty}\inf_{s\geq t}T^+_s\varphi(x)=\check{\varphi}(x)
\end{align}
uniformly on $x\in M$.

 It is clear that $T_0^+\check{\varphi}=\check{\varphi}$. For each $t>0$, we have
\begin{align*}
\check{\varphi}(x)=\lim_{\sigma\to+\infty}\inf_{s\geq \sigma}T^+_{s+t}\varphi(x)=\lim_{\sigma\to+\infty}\inf_{s\geq \sigma}\sup_{z\in M}h^{z,T^+_s\varphi(z)}(x,t).
\end{align*}
First of all, we assert that
\begin{equation}\label{excha}
\inf_{s\geq \sigma}\sup_{z\in M}h^{z,T^+_s\varphi(z)}(x,t)=\sup_{z\in M}h^{z,\inf_{s\geq \sigma}T^+_s\varphi(z)}(x,t),
\end{equation}
which implies
\[\check{\varphi}(x)=\lim_{\sigma\to+\infty}\sup_{z\in M}h^{z,\inf_{s\geq \sigma}T^+_s\varphi(z)}(x,t).\]
Note that there exists $K>0$ such that $|T^+_s\varphi(x)|\leq K$ for each $s\geq 0$ and $x\in M$, it follows that
\begin{align*}
|\sup_{z\in M}h^{z,\inf_{s\geq \sigma}T^+_s\varphi(z)}(x,t)-T^+_t\check{\varphi}(x)|&=|\sup_{z\in M}h^{z,\inf_{s\geq \sigma}T^+_s\varphi(z)}(x,t)-\sup_{z\in M}h^{z,\check{\varphi}(z)}(x,t)|\\
&\leq\sup_{z\in M}|h^{z,\inf_{s\geq \sigma}T^+_s\varphi(z)}(x,t)-h^{z,\check{\varphi}(z)}(x,t)|\\
&\leq l_t\|\inf_{s\geq \sigma}T^+_s\varphi-\check{\varphi}\|_\infty,
\end{align*}
where $l_t$ is the Lipschitz constant of the function $(z,u,x)\mapsto h^{z,u}(x,t)$ on $M\times[-K-\|\check{\varphi}\|_\infty,K+\|\check{\varphi}\|_\infty]\times M$. Letting $\sigma\rightarrow+\infty$, for each $t>0$, we have
\[
\check{\varphi}(x)=T^+_t\check{\varphi}(x),\quad \forall x\in M.
\]

It remains to verify the assertion, namely (\ref{excha}) holds. From the monotonicity of $h^{x_0,u_0}(x,t)$ with respect to $u_0$, it follows that given $x\in M$, $t,\sigma>0$, for each $z\in M$ and $s\geq \sigma$,
\[h^{z,T^+_s\varphi(z)}(x,t)\geq h^{z,\inf_{s\geq \sigma}T^+_s\varphi(z)}(x,t),\]
which gives rise to
\[\inf_{s\geq \sigma}\sup_{z\in M}h^{z,T^+_s\varphi(z)}(x,t)\geq\sup_{z\in M}h^{z,\inf_{s\geq \sigma}T^+_s\varphi(z)}(x,t).\]
Next, we show that the inverse inequality holds. By definition, for any $\eps>0$ and $z\in M$, there exists $s_0:=s_0(\eps, z)\geq \sigma$ such that
\[T_{s_0}^+\varphi(z)<\inf_{s\geq \sigma}T_s^+\varphi(z)+\eps.\]
Hence, for each $z\in M$, we have
\[h^{z,T_{s_0}^+\varphi(z)}(x,t)\leq h^{z,\inf_{s\geq \sigma}T_s^+\varphi(z)+\eps}(x,t).\]
It yields
\[\sup_{z\in M}h^{z,T_{s_0}^+\varphi(z)}(x,t)\leq \sup_{z\in M}h^{z,\inf_{s\geq \sigma}T_s^+\varphi(z)+\eps}(x,t).\]
Moreover, we have
\[\inf_{s\geq \sigma}\sup_{z\in M}h^{z,T^+_s\varphi(z)}(x,t)\leq \sup_{z\in M}h^{z,T_{s_0}^+\varphi(z)}(x,t)\leq \sup_{z\in M}h^{z,\inf_{s\geq \sigma}T_s^+\varphi(z)+\eps}(x,t).\]
Given $x\in M$ and $t>0$,  since $M$ is compact, then $h^{z,u}(x,t)$ is uniformly Lipschitz with respect to $u$ if $|u|\leq K$. Moreover, one can find $k>0$  depending only on $t$ and $K$ such that
\[|\sup_{z\in M}h^{z,u_1}(x,t)-\sup_{z\in M}h^{z,u_2}(x,t)|\leq \sup_{z\in M}|h^{z,u_1}(x,t)-h^{z,u_2}(x,t)|\leq k|u_1-u_2|.\]
Let us recall $\{T^+_t\varphi(x)\}_{t\geq 0}$ is bounded by $K$. Then
\[\inf_{s\geq \sigma}\sup_{z\in M}h^{z,T^+_s\varphi(z)}(x,t)\leq\sup_{z\in M}h^{z,\inf_{s\geq \sigma}T_s^+\varphi(z)+\eps}(x,t)\leq \sup_{z\in M}h^{z,\inf_{s\geq \sigma}T_s^+\varphi(z)}(x,t)+k\eps.\]
Letting $\eps\rightarrow 0^+$, we have
\[\inf_{s\geq \sigma}\sup_{z\in M}h^{z,T^+_s\varphi(z)}(x,t)\leq\sup_{z\in M}h^{z,\inf_{s\geq \sigma}T_s^+\varphi(z)}(x,t).\]

So far, we have shown that $\hat{\varphi}$ and $\check{\varphi}$ are forward weak KAM solutions of equation \eqref{hj}.

\medskip
\noindent  {\bf Sufficient condition (3)}:
For $\varphi\in C(M,\mathbb{R})$ satisfying (a') and (b'), both $\hat{\varphi}$ and $\check{\varphi}$ are well defined and belong to $\mathcal{S}_+$. Let $v^*_+$ be a minimal forward weak KAM solution  of equation \eqref{hj}. If  $\varphi\leq v^*_+$, then by the  monotonicity property of $T^+_t$, we get
		\[
		T^+_t\varphi\leq T^+_tv^*_+.
		\]
		Note that $v^*_+\in\mathcal{S}_+$ is equivalent to $T^+_tv^*_+(x)=v^*_+(x)$ for all $(t,x)\in [0,+\infty)\times M$.
		Letting $t\to+\infty$, we have
		 \begin{align}\label{lala}
		 \hat{\varphi},\ \check{\varphi}\leq v^*_+.
		 \end{align}
		 Since $v^*_+$ is a minimal element of $\mathcal{S}_+$, then $ \hat{\varphi}=\check{\varphi}=v^*_+$. It follows that
		 the uniform limit $\lim_{t\to+\infty} T^+_t\varphi$ exists and $\lim_{t\to+\infty} T^+_t\varphi=v_+^*$.

The proof of Main Proposition \ref{77iii} is now complete.

\begin{remark}\label{add}
We aim to show Item (3) in Main Proposition \ref{v} by using Item (2) in Main Proposition \ref{77iii} here.
Let $K=\|\inf_{v_+\in\mathcal{S}_+}v_+\|_\infty+\|u_+\|_\infty+1$. For each $\varphi$ satisfying (a') and (b'),
since \[\hat{\varphi}(x):=\limsup_{t\rightarrow+\infty}T_t^+\varphi(x),\quad \check{\varphi}(x):=\liminf_{t\rightarrow+\infty}T_t^+\varphi(x)\] are forward weak KAM solutions of (\ref{hj}), then there is $T_\varphi>0$ such that
\[
\inf_{v_+\in\mathcal{S}_+}v_+(x)-1\leq \check{\varphi}(x)-1\leq T_t^+\varphi(x)\leq \hat{\varphi}(x)+1\leq u_+(x)+1,
\]
for all $t\geq T_\varphi$ and all $x\in M$.
\end{remark}

\subsection{Proof of Corollary \ref{ad-c}}

We only need to show $\mathcal{VS}(\bar{H})$ of $\bar{H}(x,u,Du)=0$ is a singleton.
By Proposition \ref{i}, it suffices to show the forward weak KAM solution of $\bar{H}(x,-u,-Du)=0$ is unique. Let $H(x,u,p):=\bar{H}(x,-u,-p)$. A direct calculation shows that
\begin{itemize}
\item $(x(t),u(t),p(t))$ is a solution of contact Hamilton equations (\ref{c}) generated by $\bar{H}(x,u,p)$ if and only if $(x(t),-u(t),-p(t))$ is a solution of the contact Hamilton equations (\ref{c}) generated by $-{H}(x,u,p)$;
    \item $(x(t),u(t),p(t))$ is a singular point of the contact vector field generated by $-{H}(x,u,p)$ if and only if it is a singular point of the contact vector field generated by ${H}(x,u,p)$.
    \end{itemize}
It follows that the contact vector field generated by $\bar{H}(x,u,p)$ has no singular points if and only if  the contact vector field generated by ${H}(x,u,p)$ has no singular points.

In view of \cite[Corollary 1.1]{WWY2}, the Aubry set $\tilde{\mathcal{A}}$ of $H(x,u,p)$ is a periodic orbit of the contact Hamiltonian system, denoted by $(x(t),u(t),p(t))$.
Note that for any $v_+\in\mathcal{S}_+$, $\tilde{\mathcal{I}}_{v_+}$ is non-empty.
For any $(x_0,u_0,p_0)\in \tilde{\mathcal{I}}_{v_+}$, by \cite[Proposition 4.1, Lemma 4.7]{WWY2} there is a curve $\gamma:(-\infty,+\infty)\to \mathbb{S}$ such that $\gamma(0)=x_0$, $u_-(x_0)=v_+(x_0)=u_0$, $Du_-(x_0)=Dv_+(x_0)=p_0$,
\begin{align}\label{6600}
u_-(\gamma(t))=v_+(\gamma(t)),\quad \forall t\in\R.
\end{align}
and $(\gamma(t),u_-(\gamma(t)),Du_-(\gamma(t)))=\Phi_t(x_0,u_0,p_0)$.
Let
\[
\bar{u}(t)=u_-(\gamma(t)),\quad \forall t\in\R.
\]
By \cite[Lemma 4.8]{WWY2}, $(\gamma(t),\bar{u}(t))$ is a static curve \cite[Definition 3.2]{WWY2}, which implies that the Aubry set consists of $(\gamma(t),u_-(\gamma(t)),Du_-(\gamma(t)))$. Hence, we deduce that
\[
x(t)=\gamma(t),\quad \forall t\in\R.
\]
In view of \eqref{6600}, we have that $u_-(x(t))=v_+(x(t))$ for all $t\in\R$.
Recall that $(x(t),u(t),p(t))$ is a periodic orbit on $T^*\mathbb{S}\times \R$.
Thus, we get that $u_-=v_+$ everywhere. It means the forward weak KAM solution of $\bar{H}(x,-u,-Du)=0$ is unique.

\appendix\label{AAA}

\section{Weak KAM solutions and solution semigroups}

In this part, we will show that Proposition \ref{pr4.5} holds under the assumption (H1), (H2) and $|\frac{\partial H}{\partial u}|\leq \lambda$.

{\it First of all, we will prove that  a backward weak KAM solution (resp. forward weak KAM solution) is  a fixed point of $T_t^-$ (resp. $T_t^+$).}

\medskip
\noindent  {\bf Step 1}: We will show { $u\in\mathcal{S}_-$ implies $T^-_tu=u$ for all $t\geq 0$}.
By definition, we have
\begin{equation}
u(x)=\inf_{\gm(t)=x}\left\{u(\gm(0))+\int_0^tL(\gm(\tau),u(\gm(\tau)),\dot{\gm}(\tau))d\tau\right\}.
\end{equation}where the infimum is taken among the Lipschitz continuous $C^1$ curves.
We will prove $T_t^-u=u$ for each $t\geq 0$. In fact, it suffices to verify $u(x)\leq T_t^-u(x)$. The converse inequality is similar to be obtained. By contradiction, we assume $u(x)>T_t^-u(x)$. Let $\gm:[0,t]\rightarrow M$ be a minimizer of $T_t^-u$ with $\gm(t)=x$, i.e.
\begin{equation}
T_t^-u(x)=u(\gm(0))+\int_0^tL(\gm(\tau),T_\tau^- u(\gm(\tau)),\dot{\gm}(\tau))d\tau.
\end{equation}
Let $F(\tau)=u(\gm(\tau))-T_\tau^- u(\gm(\tau))$. Since $F(t)>0$ and $F(0)=0$, then one can find $s_0\in [0,t)$ such that $F(s_0)=0$ and $F(s)>0$ for $s\in (s_0,t]$. A direct calculation shows
\[F(s)\leq \lambda\int_{s_0}^sF(\tau)d\tau,\]
which implies $F(s)\leq 0$ for $s\in (s_0,t]$ from Gronwall inequality. It contradicts $F(t)>0$.

\medskip
\noindent  {\bf Step 2}: We will show {$u\in\mathcal{S}_+$ implies $T^+_tu=u$ for all $t\geq 0$}.
By the definition of forward weak KAM solution, we have
\begin{equation}
u(x)=\sup_{\gm(0)=x}\left\{u(\gm(t))-\int_0^tL(\gm(\tau),u(\gm(\tau)),\dot{\gm}(\tau))d\tau\right\}.
\end{equation}where the infimum is taken among the Lipschitz continuous curves.
We will prove $T_t^+u=u$ for each $t\geq 0$. It suffices to check $u(x)\leq T_t^+u(x)$. The converse inequality is similar to be obtained. By contradiction, we assume $u(x)>T_t^+u(x)$. Let $\gm:[0,t]\rightarrow M$  with $\gm(0)=x$ be a curve such that
\begin{equation}
u(x)=u(\gm(t))-\int_0^tL(\gm(\tau), u(\gm(\tau)),\dot{\gm}(\tau))d\tau.
\end{equation}
Let $F(\tau)=u(\gm(\tau))-T_{t-\tau}^+ u(\gm(\tau))$. Since $F(t)=0$ and $F(0)>0$, then one can find $s_0\in (0,t]$ such that $F(s_0)=0$ and $F(s)>0$ for $s\in [0,s_0)$. Note that
\[u(\gm(s))=u(\gm(s_0))-\int_s^{s_0}L(\gm(\tau), u(\gm(\tau)),\dot{\gm}(\tau))d\tau,\]
\[T_{t-s}^+u(\gm(s))\geq T_{t-s_0}^+u(\gm(s_0))-\int_s^{s_0}L(\gm(\tau), T_{t-\tau}^+u(\gm(\tau)),\dot{\gm}(\tau))d\tau.\]
A direct calculation shows
\[F(s)\leq \lambda\int_{s}^{s_0}F(\tau)d\tau,\]
which implies $F(s)\leq 0$ for $s\in [0,s_0]$ from Gronwall inequality. It contradicts $F(0)>0$.

\medskip

\medskip

{\it Next, we will prove that { a fixed point of $T_t^-$ (resp. $T_t^+$) is a backward weak KAM solution (resp. forward weak KAM solution)}.}

\medskip
\noindent  {\bf Step 3}:
We will prove that { a fixed point of $T_t^-$  is a backward weak KAM solution}.
\begin{lemma}\label{la1}
For $u\in C(M,\R)$ and $t\geq 0$, if $u=T_t^-u$ on $M$, then $u\prec L$. Moreover, $u$ is Lipschitz continuous.
\end{lemma}
\begin{proof}
 For each continuous and piecewise $C^1$ curve $\gm:[0,t]\rightarrow M$,  we have
\begin{align*}
u(\gm(t))= T_t^-u(\gm(t))&\leq u(\gm(0))+\int_0^tL(\gm(\tau), T_\tau^- u(\gm(\tau)),\dot{\gm}(\tau))d\tau,\\
&= u(\gm(0))+\int_0^tL(\gm(\tau), u(\gm(\tau)),\dot{\gm}(\tau))d\tau.
\end{align*}
 Then
\begin{equation*}
u(\gm(t))-u(\gm(0))\leq \int_0^tL(\gm(\tau), u(\gm(\tau)),\dot{\gm}(\tau))d\tau.
\end{equation*}
The Lipschitz continuity of $u$ follows from \cite[Lemma 4.1]{WWY2}.
\end{proof}

\begin{lemma}\label{la2}
Let $u\prec L$ and $\gamma:[a,b]\rightarrow M$ be a $(u,L,0)$-calibrated curve.
	 then  $u$ is differentiable at $\gamma(t)$ for each $t\in (a,b)$, $\big(\gamma(t),u(\gamma(t)),p(t)\big)$ satisfies contact Hamilton equations  on $(a,b)$, where \[p(t)=\frac{\partial L}{\partial \dot{x}}(\gamma(t),u(\gamma(t)),\dot{\gamma}(t)).\]
	Moreover, we have
	\[
	\big(\gamma(t+s),u(\gamma(t+s)),Du(\gamma(t+s)\big)=\Phi_{s}\big(\gamma(t),u(\gamma(t)),Du(\gamma(t)\big), \quad\forall t, \ s, \ t+s\in (a,b),
	\]
	and
	\[
	H\big(\gamma(t),u(\gamma(t)),p(t)\big)=0,\quad\forall t\in (a,b).
	\]
\end{lemma}

\begin{proof}
It is a direct result following from Lemma 4.2, Lemma 4.3 and Proposition 4.1 in \cite{WWY2}.
\end{proof}

By Lemma \ref{la1}, $u\prec L$. By $T_t^-u=u$ for each $t\geq 0$, one can find $\bar{\gm}:[0,t]\rightarrow M$ with $\bar{\gm}(0)=x$ such that
\[u(\bar{\gm}(t))-u(\bar{\gm}(0))=\int_0^tL(\bar{\gm}(s),u(\bar{\gm}(s)),\dot{\bar{\gm}}(s))ds.\]
Let $\gm_t(s):=\bar{\gm}(s+t)$, we have
\[u({\gm}_t(0))-u({\gm}_t(-t))=\int_{-t}^0L({\gm}_t(s),u({\gm}_t(s)),\dot{{\gm}}_t(s))ds.\]
Moreover, for each $t'\in (0,t]$, there holds
\begin{equation}\label{e1}
u({\gm}_t(0))-u({\gm}_t(-t'))=\int_{-t'}^0L({\gm}_t(s),u({\gm}_t(s)),\dot{{\gm}}_t(s))ds,
\end{equation}
which means that $\gm_t:[-t,0]\rightarrow M$ is $(u, L, 0)$-calibrated. By Lemma \ref{la1},
$u$ is differentiable at $\gamma_t(s)$ for each $s\in (-t,0)$, $\big(\gamma_t(s),u(\gamma_t(s)),p_t(s)\big)$ satisfies contact Hamilton equations  on $(-t,0)$, where \[p_t(s)=\frac{\partial L}{\partial \dot{x}}(\gamma_t(s),u(\gamma_t(s)),\dot{\gamma}_t(s)).\]
	Moreover, for $\tau,  s,  \tau+s\in (-t,0)$, we have
	\[
	\big(\gamma_t(\tau+s),u(\gamma_t(\tau+s)),Du(\gamma_t(\tau+s)\big)=\Phi_{s}\big(\gamma_t(\tau),u(\gamma_t(\tau)),Du(\gamma_t(\tau)\big),
	\]
	and
	\[
	H\big(\gamma_t(s),u(\gamma_t(s)),p_t(s)\big)=0,\quad\forall s\in (-t,0).
	\]
Note that $\|u\|_\infty\leq C$ and $|\frac{\partial H}{\partial u}|\leq \lambda$, we have
\[H(\gm_t(s),0,p_t(s))\leq \lambda C.\]
By (H2), there exists a constant $K>0$ independent of $t$ such that $\|p_t(s)\|\leq K$ for each $s\in [-t,0]$. Moreover, there exists a constant $K'>0$ independent of $t$ such that $\|\dot{\gm}_t(s)\|\leq K'$ for each $s\in [-t,0]$. It follows that for each $s\in [-t,0]$, $(\gm_t(s),u(\gm_t(s)),\dot{\gm}_t(s))$ is contained in a compact set denoted $\mathcal{K}$ independent of $t$. One can find a sequence $t_n$ such that $(\gm_{t_n}(0),u(\gm_{t_n}(0)),\dot{\gm}_{t_n}(0))$ tends to $(x,u(x),v_0)$ as $n\rightarrow \infty$.

Let $\Gamma_n(s):=(\gm_{t_n}(s),u(\gm_{t_n}(s)),\dot{\gm}_{t_n}(s))$. Then for each $t'\in (0,+\infty)$,
\[\Gamma_n|_{[-t',0]}\rightarrow \Phi_s(x,u(x),v_0)|_{s\in [-t',0]}\]
in the $C^0$ topology.

Let $\gm_\infty(s):=\pi\Phi_s(x,u(x),v_0)$ where $\pi:TM\times\R\rightarrow M$. By Lemma \ref{la2}, we have
\[\Phi_s(x,u(x),v_0)=(\gm_\infty(s),u(\gm_\infty(s)),\dot{\gm}_\infty(s)).\]
By (\ref{e1}), for each $t'\in (0,+\infty)$ and $n$ large enough,
\[u({\gm}_{t_n}(0))-u({\gm}_{t_n}(-t'))=\int_{-t'}^0L({\gm}_{t_n}(s),u({\gm}_{t_n}(s)),\dot{{\gm}}_{t_n}(s))ds,\]
Pass to the limit as $n\rightarrow\infty$, we have
  \begin{equation}\label{eee999}
u(x)-u(\gm_\infty(-t'))=\int_{-t'}^{0}L(\gm_\infty(s),u(\gm_\infty(s)),\dot{\gm}_\infty(s))ds,
\end{equation}
where $\gm_\infty(0)=x$.

\medskip
\noindent  {\bf Step 4}: We will prove that { a fixed point of $T_t^+$  is a forward weak KAM solution}.

Similar to Lemma \ref{la1},
for each $t\geq 0$, if $u= T_t^+u$, then  $u\prec L$.
 By $T_t^+u=u$ for each $t\geq 0$, one can find ${\gm}_t:[0,t]\rightarrow M$ with ${\gm}_t(0)=x$ such that
\[u({\gm}_t(t))-u({\gm}_t(0))=\int_0^tL({\gm}_t(s),u({\gm}_t(s)),\dot{{\gm}}_t(s))ds.\]
Moreover, for each $t'\in (0,t]$, there holds
\begin{equation}\label{e1000}
u({\gm}_t(t'))-u({\gm}_t(0))=\int_{0}^{t'}L({\gm}_t(s),u({\gm}_t(s)),\dot{{\gm}}_t(s))ds,
\end{equation}
which means $\gm_t:[0,t]\rightarrow M$ is $(u, L, 0)$-calibrated. By a similar argument as Step 3 (from (\ref{e1}) to (\ref{eee999})), one can find a $C^1$ curve $\gm_\infty:[0,+\infty)\rightarrow M$ with $\gm_\infty(0)=x$ such that for each $t'\in [0,+\infty)$,
\[u(\gm_\infty(t'))-u(x)=\int^{t'}_{0}L(\gm_\infty(s),u(\gm_\infty(s)),\dot{\gm}_\infty(s))ds.\]

\vskip 1cm

\noindent {\bf Acknowledgements:} 
Jun Yan wishes to thank Hitoshi Ishii for his kind hospitality and useful discussions at Waseda University
in the spring of 2018.
Kaizhi Wang is supported by NSFC Grant No. 11771283, 11931016.
Lin Wang is supported by NSFC Grant No. 11790273, 11631006.
Jun Yan is supported by NSFC Grant No.  11631006, 11790273.

\medskip


\begin{thebibliography}{99}\small
\addcontentsline{toc}{section}{References}
\renewcommand{\baselinestretch}{0.0}
\setlength\itemsep{-2pt}


\bibitem{Bardi-Capuzzo-Dolcetta} M. Bardi and I. Capuzzo-Dolcetta, Optimal control and viscosity solutions of Hamilton-Jacobi-Bellman equations, Systems \& Control: Foundations \& Applications. Birkh\"auser, 1997.


\bibitem{Ba33} G. Barles, An introduction to the theory of viscosity solutions for first-order
Hamilton-Jacobi equations and applications. (English summary) Hamilton-Jacobi equations: approximations, numerical analysis and applications, 49--109, Lecture Notes in Math., \textbf{2074}, Springer, Heidelberg, 2013.


\bibitem{CCWY}  P. Cannarsa, W. Cheng, K. Wang and J. Yan,  Herglotz' generalized variational principle and  contact type Hamilton-Jacobi equations, {\it Trends in Control Theory and Partial Differential Equations}, 39--67.
 Springer INdAM Ser., \textbf{32}, Springer, Cham, 2019.



\bibitem{CCJWY}
P. Cannarsa, W. Cheng, L. Jin, K. Wang and J. Yan, Herglotz' variational principle and Lax-Oleinik evolution,
J. Math. Pures Appl., to appear.


\bibitem{CS}
P. Cannarsa and C. Sinestrari.  Semiconcave functions, Hamilton-Jacobi equations, and optimal control. Vol. \textbf{58}. Springer, 2004.


		
\bibitem{CCIZ} 	
Q. Chen, W. Cheng, H. Ishii and K. Zhao,  Vanishing contact structure problem and convergence of the viscosity solutions, Comm. Partial Differential Equations \textbf{44} (2019), 801--836.



\bibitem{CL}
M. Crandall and P.-L. Lions, Viscosity solutions of Hamilton-Jacobi equations, Trans. Amer. Math. Soc. \textbf{277} (1983), 1--42.


\bibitem{CIL}
M. Crandall, H. Ishii and P.-L. Lions, User's guide to viscosity solutions of second order partial differential equations, Bull. Amer. Math. Soc. (N.S.) \textbf{27} (1992),  1--67.



\bibitem{Dav}
A. Davini, A. Fathi, R. Iturriaga and M. Zavidovique, Convergence of the solutions of the discounted Hamilton-Jacobi equation: convergence of the discounted solutions, Invent. Math. \textbf{206} (2016), 29--55.



\bibitem{Dav1} A. Davini, A. Fathi, R. Iturriaga and M. Zavidovique,  Convergence of the solutions
of the discounted equation: the discrete case, Math. Z. \textbf{284} (2016), 1021--1034.




\bibitem{Fat-b}
A. Fathi, Weak KAM Theorem in Lagrangian Dynamics, preliminary version 10, Lyon,
unpublished (2008).


\bibitem{Go}
D. Gomes, Generalized Mather problem and selection principles for viscosity
solutions and Mather measures, Adv. Calc. Var. \textbf{1} (2008), 291--307.


\bibitem{Go1} D. Gomes, H. Mitake and H. Tran,  The selection problem for discounted
Hamilton-Jacobi equations: some non-convex cases. J. Math. Soc. Jpn. \textbf{70} (2018), 345--364.


\bibitem{Ishii}
H. Ishii, Asymptotic solutions for large time of Hamilton-Jacobi equations, International Congress of Mathematicians. Vol. III, 213--227, Eur. Math. Soc., Zurich, 2006.


\bibitem{i1} H. Ishii, H. Mitake and H. Tran, The vanishing discount problem and viscosity
Mather measures. Part 1: the problem on a torus, J. Math. Pures Appl. \textbf{9} (2017), 125--149.


\bibitem{i2} H. Ishii, H. Mitake and H. Tran,  The vanishing discount problem and viscosity
mather measures. Part 2: Boundary value problems, J. Math. Pures Appl. \textbf{9} (2017),
261--305.


\bibitem{It} R. Iturriaga and H. Sanchez-Morgado, Limit of the infinite horizon discounted Hamilton-Jacobi equation, Discrete Contin. Dyn. Syst. Ser. B \textbf{15} (2011), 623--635.



\bibitem{Lions}P.-L. Lions,  Generalized solutions of Hamilton-Jacobi equations. Pitman, Boston, 1982.


\bibitem{LPV}
P.-L. Lions, G. Papanicolaou and S. R. S. Varadhan, Homogenization of Hamilton-Jacobi
Equations, unpublished.


\bibitem{Mn3} R. Ma\~{n}\'{e}, Lagrangain flows: the dynamics of
globally minimizing orbits, Bol. Soc. Brasil. Math. \textbf{28} (1997), 141--153.


\bibitem{MS} S. Mar\`{o} and A. Sorrentino, Aubry-Mather theory for conformally symplectic systems, Commun. Math. Phys. \textbf{354} (2017), 775--808.


\bibitem{M1} J. Mather, Action minimizing invariant measures for positive definite Lagrangian
systems, Math. Z. \textbf{207} (1991), 169--207.


\bibitem{MT}
H. Mitake and H. Tran, Selection problems for a discount degenerate viscous
Hamilton-Jacobi equation, Adv. Math. \textbf{306} (2017), 684--703.


\bibitem{MK}
H. Mitake and K. Soga, Weak KAM theory for discounted Hamilton-Jacobi
equations and its application, Calc. Var. \textbf{57} (2018), Art. 78, 32 pp.



\bibitem{S}
K. Soga, Selection problems of $\mathbb{Z}^2$
-periodic entropy solutions and viscosity solutions, Calc. Var. Partial Differ. Equ. \textbf{56} (2017),  Art. 119, 30 pp.


\bibitem{SWY}
X. Su, L. Wang and J. Yan, Weak KAM theory for Hamilton-Jacobi equations  depending on unkown functions, Discrete Contin. Dyn. Syst.  \textbf{36} (2016), 6487--6522.


\bibitem{WWY} K. Wang, L. Wang and J. Yan, Implicit variational principle for contact Hamiltonian systems, Nonlinearity \textbf{30} (2017), 492--515.


\bibitem{WWY1} K. Wang, L. Wang and J. Yan,  Variational principle for contact Hamiltonian systems and its applications, J. Math. Pures Appl. \textbf{123} (2019), 167--200.


\bibitem{WWY2} K. Wang, L. Wang and J. Yan, Aubry-Mather theory for contact Hamiltonian systems, Commun. Math. Phys. \textbf{366} (2019), 981--1023.



\bibitem{WY}
Y. Wang and J. Yan, A variational principle for contact Hamiltonian systems, J. Differential Equations
\textbf{267} (2019), 4047--4088.



\bibitem{ZC} K. Zhao and W. Cheng, On the vanishing contact structure for viscosity solutions of contact type Hamilton-Jacobi equations I: Cauchy problem, Discrete Contin. Dyn. Syst. \textbf{39} (2019), 4345--4358.

\end{thebibliography}
\end{document}